\documentclass[12pt,a4paper]{article}

\usepackage[T1]{fontenc}
\usepackage[utf8]{inputenc}
\usepackage[a4paper,margin=1in]{geometry}
\usepackage{microtype}
\usepackage{parskip}
\usepackage{enumitem}
\usepackage{lmodern}
\usepackage{graphicx}
\usepackage{xcolor}

\usepackage{amsmath,amssymb,amsfonts,amsthm,mathtools}
\usepackage{mathrsfs}
\usepackage{bm}

\usepackage[colorlinks=true,linkcolor=blue,citecolor=blue,urlcolor=blue]{hyperref}

\newtheorem{introtheorem}{Theorem}

\newtheorem{theorem}{Theorem}[section]
\newtheorem{lemma}[theorem]{Lemma}
\newtheorem{proposition}[theorem]{Proposition}
\newtheorem{corollary}[theorem]{Corollary}

\theoremstyle{definition}
\newtheorem{definition}[theorem]{Definition}

\theoremstyle{remark}

\newcommand{\ip}[2]{\left\langle #1,#2\right\rangle}
\newcommand{\R}{\mathbb{R}}

\newcommand{\Sp}{\mathbb{S}}

\newcommand{\cA}{\mathcal{A}}

\DeclareMathOperator{\Span}{span}

\newcommand{\E}{\mathbb{E}}

\newcommand{\norm}[1]{\left\lVert#1\right\rVert}

\newcommand{\dd}{\,\mathrm{d}}

\title{Aomoto interpolation and Coxeter systems}

\author{
	Ángel D. Martínez
	\and
	Oscar Ortega-Moreno
}

\begin{document}
	
	\maketitle
	
	\begin{abstract}
	In this paper, we construct a Lagrange-type basis for the Aomoto space $AO(\mathcal A)$, naturally indexed by the chambers of the hyperplane arrangement $\mathcal A$. The construction relies on a dimension theorem of Orlik and Terao and yields an interpolation formula for elements of $AO(\mathcal A)$. We use this formula to characterize the extremal configurations in the strong polarization inequality as those arising from finite Coxeter reflection systems. We further show that the interpolation formula gives rise to a family of \emph{chamber identities}, including identities that were central to our earlier proof of the strong polarization problem and the Gaussian product inequality. Finally, we adapt the recent breakthrough of Ouimet and Greaves to prove a generalized Gaussian Product Inequality for completely monotone functions.
\end{abstract}
	
	\section{Introduction}
	\label{sec:intro}
	
	Given the central arrangement of hyperplanes
	$\cA=\{v_1^{\perp},\dots,v_n^{\perp}\}$, the space of rational functions spanned
	by the reciprocals of products of the linear forms $\langle v_j,x\rangle$ taken over linearly independent subsets of the $v_j$ is known as the \emph{Aomoto space} $AO(\cA)$ (see Section \ref{sec:chamber-basis} for more details). In 1992 Aomoto conjectured, and Orlik and Terao subsequently proved, the following formula for the dimension of this vector space:
	\begin{theorem}[Orlik and Terao]\label{thm:orlikterao}
		The dimension of the Aomoto space equals the number of chambers of the hyperplane arrangement	$$
		\dim AO(\cA)=\#\{\text{chambers of }\cA\}.
		$$
	\end{theorem}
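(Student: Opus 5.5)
The plan is to prove the statement by deletion--restriction, which matches the classical recursion for chambers. Write $\alpha_j(x)=\langle v_j,x\rangle$ and fix the order $v_1<\dots<v_n$. For an independent set $S$ put $\phi_S=\prod_{j\in S}\alpha_j^{-1}$; these functions span $AO(\cA)$. Since $\phi_S$ is homogeneous of degree $-|S|$, the space splits as $AO(\cA)=\bigoplus_k AO_k(\cA)$, and it suffices to control each graded piece.

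\emph{Step 1 (spanning by nbc monomials).} Let $C$ be a circuit, with dependence $\sum_{i\in C}c_i\alpha_i=0$ and all $c_i\neq 0$. Multiplying by $\prod_{j\in C}\alpha_j^{-1}$ gives
\[
\sum_{i\in C}c_i\,\phi_{C\setminus\{i\}}=0 .
\]
Take $i_0=\min C$. This identity writes $\phi_{C\setminus\{i_0\}}$, the broken circuit, as a combination of sets that contain $i_0$ and are therefore lexicographically smaller. Now let $S$ be independent and contain a broken circuit $C\setminus\{i_0\}$. Multiplying the identity by $\phi_{S\setminus C}$ rewrites $\phi_S$ as a combination of $\phi_{S'}$ with $S'$ lexicographically smaller. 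Some $S'$ may be dependent. In that case I would repeat the partial-fraction reduction, since a dependent product of reciprocals reduces to independent ones of the same degree. The reduction terminates, so the no-broken-circuit (nbc) monomials $\phi_S$ span $AO(\cA)$, and hence
\[
\dim AO(\cA)\le\#\mathrm{nbc}(\cA).
\]

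\emph{Step 2 (independence via a short exact sequence).} Let $H=v_n^\perp$ be the last hyperplane. Let $\cA'=\cA\setminus\{H\}$ be the deletion and $\cA''$ the restriction of $\cA$ to $H$. There is an inclusion $AO(\cA')\hookrightarrow AO(\cA)$. I would define a residue map $\mathrm{Res}_H\colon AO(\cA)\to AO(\cA'')$ by multiplying by $\alpha_n$ and then restricting to $H$. The kernel of $\mathrm{Res}_H$ contains $AO(\cA')$, and the map is surjective onto the span of the images of the nbc sets containing $n$. The aim is to show exactness of
\[
0\to AO(\cA')\to AO(\cA)\to AO(\cA'')\to 0 .
\]
Given this, $\dim AO(\cA)=\dim AO(\cA')+\dim AO(\cA'')$ follows by induction on $n$; the base case is the empty or rank-one arrangement. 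The same deletion--restriction recursion holds for nbc counts. It also holds for the number of chambers $r(\cA)=r(\cA')+r(\cA'')$, because the hyperplane $H$ splits exactly those chambers of $\cA'$ that meet $H$, and these correspond to chambers of $\cA''$. Induction then gives $\dim AO(\cA)=r(\cA)$.

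\emph{Main obstacle.} The hard part is exactness in the middle, that is, $\ker \mathrm{Res}_H\subseteq AO(\cA')$. This is equivalent to the linear independence of the nbc monomials. To prove it, I would take a relation $f=\sum a_S\phi_S$ among nbc monomials. Applying $\mathrm{Res}_H$ kills the terms with $n\notin S$. It sends the terms with $n\in S$ to the nbc monomials of $\cA''$, relative to the induced order, and these are independent by induction. So all $a_S$ with $n\in S$ vanish, and the remaining relation lives in $AO(\cA')$, where induction applies again.

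The delicate points are two. First, $\mathrm{Res}_H$ must be well defined on all of $AO(\cA)$, including functions with higher-order poles along $H$ produced by the reductions. I would handle this by working with Step 1's nbc spanning set, where $\alpha_n$ appears to power at most one. Second, distinct nbc sets containing $n$ must restrict to distinct nbc sets of $\cA''$. This is where choosing $H$ last in the order is essential. Alternatively, one can skip the chamber recursion at the end: the nbc count equals $\sum_X|\mu(\hat0,X)|=\pi(\cA,1)$, which equals $r(\cA)$ by Zaslavsky's theorem.
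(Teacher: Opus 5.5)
Your outline is a correct proof and is essentially the classical argument. The nbc monomials span, a residue along the last hyperplane proves their independence by induction, and the broken-circuit count $\#\mathrm{nbc}(\cA)=\pi(\cA,1)$ together with Zaslavsky's theorem (or the parallel deletion--restriction recursions) identifies the dimension with the number of chambers. The paper gives no proof of this statement. It quotes \cite{orlik-terao} and uses the result only in \eqref{eq:OA}, to upgrade $\{L_u\}$ from a linearly independent family to a basis. For that purpose only $\dim AO(\cA)\le\#\{\text{chambers of }\cA\}$ is needed. Proposition~\ref{prop:lagrange}, together with the fact from \cite{MOM} that each chamber contains exactly one point of $\mathscr E(P)$, already exhibits that many linearly independent elements of $AO(\cA)$. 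Hence your Step~1 plus $\#\mathrm{nbc}(\cA)=\#\{\text{chambers}\}$ would make Theorem~\ref{thm:basis} self-contained. For real arrangements the Lagrange property also replaces your residue argument (your ``main obstacle''): the nbc monomials span a space of dimension at least their number, so they are automatically independent. Your route, in turn, gives more than the paper needs: an explicit nbc basis, the graded dimensions $\dim AO_p(\cA)=\#\{\text{nbc sets of size }p\}$, and validity over $\mathbb C$, with no input from the critical-point analysis of \cite{MOM}.

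One sentence in Step~1 is false and should be replaced. A dependent square-free product of reciprocals does \emph{not} in general reduce to square-free independent ones of the same degree. Partial fractions produce repeated factors, as in $\frac{1}{xy(x+y)}=\frac{1}{x^2y}-\frac{1}{x^2(x+y)}$. For the forms $x,y,x+y,z$ on $\R^3$, the function $1/(xy(x+y))$ does not lie in $AO(\cA)$, since every element of degree $-3$ there has the form $z^{-1}g(x,y)$.

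Fortunately the case never arises. Suppose $S$ is independent and $C\setminus\{i_0\}\subseteq S$. Then $i_0\notin S$ and $S\cup\{i_0\}$ spans a space of dimension $|S|$. For each $i\in C$ we have $i\in\Span(C\setminus\{i\})$, so $(S\cup\{i_0\})\setminus\{i\}$ has $|S|$ elements spanning that same space, hence is independent. It is also square-free, because $S\setminus C$ is disjoint from $C$. This dissolves your first delicate point as well: by definition every element of $AO(\cA)$ has at most a simple pole along $H$, so $\mathrm{Res}_H$ is defined on all of $AO(\cA)$.

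Your second delicate point is correct when $\cA''$ is ordered by minimal representatives. Take $i<j$ with $v_i^\perp\cap H=v_j^\perp\cap H$; then $\{i,j,n\}$ is a three-element circuit whose broken circuit is $\{j,n\}$. So for an nbc set $S\ni n$, each $j\in S\setminus\{n\}$ is the minimal representative of $v_j^\perp\cap H$. Injectivity and the nbc property of the image follow from this.
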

	
We refer to \cite{orlik-terao} for a proof and more details. In our earlier work on polarization problems~\cite{MOM}, a central role was played by the polynomial
	\begin{equation}
		P(x)=\prod_{j=1}^{n}\langle v_j,x\rangle
		\label{eq:P}
	\end{equation}
	and by the rational functions derived from it.  There we studied the set $\mathscr{E}(P)$ of local extrema of $P$ on the unit sphere, one in each chamber of $\cA$, and the weighted identities these extrema satisfy were decisive in the argument (cf.\ Theorem~\ref{thm:weigthed} below). The present paper starts from the observation that the rational functions appearing in that work belong naturally to the Aomoto space $AO(\cA)$.
	
	To each local extremum the present paper attaches an explicit rational function in $AO(\cA)$. Its definition uses a positive weight introduced in~\cite{MOM}: for every $u\in\mathscr{E}(P)$, set
	$$
	\mu(u)
	=
	\det\left(
	I+\frac1n\sum_{j=1}^{n}\frac{v_j\otimes v_j}{\langle v_j,u\rangle^{2}}
	\right)^{-1},
	$$
	where $v\otimes v$ denotes the rank-one operator
	$$
	(v\otimes v)z=\langle v,z\rangle v .
	$$
	The tensor inside the determinant is positive definite, so $\mu(u)>0$.
	\begin{definition}
		For $u\in\mathscr{E}(P)$, define the rational function
		$$
		L_u(x)
		=
		\mu(u)\,
		\det\left(
		I+\frac1n\sum_{j=1}^{n}
		\frac{v_j\otimes v_j}{\langle v_j,u\rangle\,\langle v_j,x\rangle}
		\right).
		$$
	\end{definition}
	We show below that each $L_u$ lies in $AO(\cA)$, and that these functions form a basis of $AO(\cA)$ enjoying a Lagrange interpolation property.
    
	\begin{introtheorem}[Interpolation formula]\label{thm:introA}
		The functions $\{L_u:u\in\mathscr{E}(P)\}$ form a basis of $AO(\cA)$, and every $f\in AO(\cA)$ satisfies
		$$
		f(x)=\sum_{u\in\mathscr{E}(P)}f(u)\,L_u(x).
		$$
	\end{introtheorem}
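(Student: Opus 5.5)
The plan is to prove three things in turn: each $L_u$ lies in $AO(\cA)$; the family $\{L_u\}$ satisfies the Lagrange property $L_u(w)=\delta_{uw}$ for $u,w\in\mathscr{E}(P)$; and a dimension count via Theorem~\ref{thm:orlikterao}. The interpolation formula then follows formally.

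\emph{Membership.} Put $c_j=\big(n\langle v_j,u\rangle\langle v_j,x\rangle\big)^{-1}$. I expand $\det(I+\sum_j c_j\, v_j\otimes v_j)$ by writing the sum as $VCV^{T}$, where $V$ has columns $v_j$ and $C=\mathrm{diag}(c_j)$. Then $\det(I+VCV^{T})=\det(I+CV^{T}V)$, and the expansion by principal minors (Cauchy--Binet) gives
$$\det\Big(I+\sum_j c_j\,v_j\otimes v_j\Big)=\sum_{S\subseteq\{1,\dots,n\}}\det\big(\langle v_i,v_k\rangle\big)_{i,k\in S}\prod_{j\in S}c_j .$$
The Gram determinant vanishes unless $\{v_j\}_{j\in S}$ is linearly independent. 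Hence
$$L_u(x)=\mu(u)\sum_{S\ \text{indep.}} \frac{\det G_S}{n^{|S|}\prod_{j\in S}\langle v_j,u\rangle}\cdot\frac{1}{\prod_{j\in S}\langle v_j,x\rangle}.$$
This is a linear combination of the generators of $AO(\cA)$; the term $S=\emptyset$ is the constant $1$, which corresponds to the empty independent set. So $L_u\in AO(\cA)$.

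\emph{Lagrange property.} Setting $x=u$ gives $L_u(u)=\mu(u)\mu(u)^{-1}=1$ by the definition of $\mu$. For $w\neq u$ in $\mathscr{E}(P)$, I use the Lagrange multiplier equation for a critical point of $P$ on the sphere, namely $\sum_j v_j/\langle v_j,u\rangle=\lambda u$. Pairing with $u$ gives $\lambda=n$, so
$$\frac1n\sum_j \frac{v_j}{\langle v_j,u\rangle}=u,$$
and the same identity holds for $w$. Let $M=I+\frac1n\sum_j \frac{v_j\otimes v_j}{\langle v_j,u\rangle\langle v_j,w\rangle}$ and $z=u-w\neq 0$. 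Then
$$Mz=(u-w)+\frac1n\sum_j v_j\Big(\frac{1}{\langle v_j,w\rangle}-\frac{1}{\langle v_j,u\rangle}\Big)=(u-w)+w-u=0 .$$
So $M$ is singular and $L_u(w)=0$. Every $f\in AO(\cA)$ can be evaluated at each $u$, because extrema lie in the interiors of chambers, where no $\langle v_j,\cdot\rangle$ vanishes.

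\emph{Conclusion.} The evaluation matrix $\big(L_u(w)\big)_{u,w}$ is the identity, so the $L_u$ are linearly independent. By~\cite{MOM} there is exactly one extremum per chamber, so $\#\mathscr{E}(P)$ equals the number of chambers. By Theorem~\ref{thm:orlikterao} this number is $\dim AO(\cA)$, hence the $L_u$ form a basis. Writing $f=\sum_u a_uL_u$ and evaluating at $w$ gives $a_w=f(w)$, which is the interpolation formula.

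The step I expect to need the most care is this count, namely that $\mathscr{E}(P)$ has exactly one point per chamber, including the antipodal pairs. I would cite it from~\cite{MOM}: $\log|P|$ is strictly concave on each chamber cone intersected with an affine slice. All other steps are direct computations.
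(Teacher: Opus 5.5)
Your proposal is correct and follows the paper's proof essentially step for step: the principal-minor (Gram determinant) expansion for membership, subtracting the defining equations of $u$ and $w$ to put $u-w$ in the kernel for the Lagrange property, and then linear independence combined with the one-point-per-chamber count from \cite{MOM} and Theorem~\ref{thm:orlikterao}. The only quibble is your parenthetical sketch of that count. Strict concavity of $\log|P|$ on an affine slice of a chamber does not locate the critical points of $P$ on the sphere, because the slice maximizer depends on the slice chosen. What does work is strict concavity of $\log|P(x)|-\frac n2\|x\|^2$ on the open cone, whose critical points are exactly the solutions of $x=\frac1n\sum_j v_j/\langle v_j,x\rangle$. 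Since you, like the paper, simply cite \cite{MOM} for this count, it does not affect the argument.
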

	
	This is restated and proved as Theorem~\ref{thm:basis} below. The formula translates
	identities in $AO(\cA)$ into finite sums over the critical points; specializing
	it to the constant function and to the individual reciprocals
	$1/\langle v_i,x\rangle$ yields a family of \emph{chamber identities} for the
	weights $\mu(u)$. These are collected in Theorems~\ref{thm:chamber-identities} and~\ref{thm:weigthed}. Of particular interest among them is identity~\eqref{eq:local-global}, on which the application developed below rests.
	
	The same identity also underlies a recent proof of the Gaussian Product Inequality~\cite{OUI}, which is closely connected to the mechanism introduced in our earlier work on polarization problems and Coxeter systems. Its central algebraic step is not independent of ours: both attach positive determinant weights to distinguished points in the chambers, or branches, and establish the same weighted-average identities over them. Under the natural identification of the correlation matrix with the Gram matrix of the defining vectors, the branch weights and identities in the Gaussian proof coincide with those in our proof of the strong polarization theorem. The novelty of that proof lies in an ingenious change of variables which, combined with a representation formula and an application of Jensen's inequality, yields the result. 
    
    It should be noted, however, that their derivation of the algebraic identities requires the hyperplanes to divide space into the maximal number of chambers. Our original derivation, by contrast, rested on the classical Euler--Jacobi vanishing theorem and applied to more general arrangements. In the present paper, using Theorem~\ref{thm:introA}, we give a new proof of these identities. Our two approaches appear to be linked through the theory of residues and the interpolation properties (that seem to have motivated the original discovery of Euler and Jacobi).
	
	It seems then, that our interpolation theorem uncovers and substantially generalizes the structure underlying this common mechanism: it applies to arbitrary central hyperplane arrangements, allowing linear dependencies among the defining vectors, which is precisely the generality afforded by Theorem~\ref{thm:introA}. The main application we develop below concerns the strong polarization inequality~\cite{MOM}, which we now recall. A second application can be found in Section \ref{sec:revisited}, where we introduce a generalized Gaussian Product Inequality that can be proved following the recent breakthrough of Ouimet and Greaves (cf. Theorem~\ref{prop:main} and its Corollary~\ref{corr} below).
	
	\begin{theorem}[Strong polarization conjecture, \cite{MOM}]\label{conj:strong-pol}
		For any unit vectors $v_1,\dots,v_n\in\Sp^{d-1}$ there exists a unit vector
		$u\in\Sp^{d-1}$ such that
		\begin{equation}\label{eq:strong-pol}
			\sum_{j=1}^{n}\frac{1}{\langle v_j,u\rangle^{2}}\le n^{2}.
		\end{equation}
	\end{theorem}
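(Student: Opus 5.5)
The plan is to choose $u$ among the critical points $\mathscr{E}(P)$ of $P(x)=\prod_j\langle v_j,x\rangle$ on the sphere, and to show that a \emph{weighted average} of $S(u):=\sum_{j}\langle v_j,u\rangle^{-2}$ over $u\in\mathscr{E}(P)$ is at most $n^2$. Some $u$ then satisfies \eqref{eq:strong-pol}. The weights are the $\mu(u)$, and the averaging identities come from Theorem~\ref{thm:introA}. This argument is not yet checked and may fail at the third step.

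\textbf{Step 1: the critical point equation.} Apply Lagrange multipliers to $\log|P|$ on $\Sp^{d-1}$. At every $u\in\mathscr{E}(P)$ this gives
$$\sum_{j=1}^n \frac{v_j}{\langle v_j,u\rangle}=n\,u.$$
The multiplier equals $n$, as one sees by pairing with $u$. In particular the rank-one tensor $\sum_j v_j\otimes v_j/\langle v_j,u\rangle^2$ has trace $S(u)$.

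\textbf{Step 2: normalization from the constant function.} The constant $1$ lies in $AO(\cA)$; we will check this directly or via the span description in Section~\ref{sec:chamber-basis}. Theorem~\ref{thm:introA} then gives $\sum_{u}L_u(x)\equiv1$. Letting $x\to\infty$ along a generic ray, each determinant in $L_u$ tends to $1$, so $\sum_{u\in\mathscr{E}(P)}\mu(u)=1$ and the $\mu(u)$ are probability weights.

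\textbf{Step 3: the second identity (main obstacle).} Apply the interpolation formula to $f_i(x)=1/\langle v_i,x\rangle$, which lies in $AO(\cA)$:
$$\frac{1}{\langle v_i,x\rangle}=\sum_u \frac{\mu(u)}{\langle v_i,u\rangle}\det\Bigl(I+\frac1n\sum_j\frac{v_j\otimes v_j}{\langle v_j,u\rangle\langle v_j,x\rangle}\Bigr).$$
I would expand both sides homogeneously as $x=tz$ with $t\to\infty$ and compare the coefficients of $t^{-1}$. The left side gives $1/\langle v_i,z\rangle$. The right side gives $\sum_u\frac{\mu(u)}{\langle v_i,u\rangle}\cdot\frac1n\sum_j\frac{\langle v_j,v_j\rangle}{\langle v_j,u\rangle\langle v_j,z\rangle}$, from the linear term of the determinant. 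The constant-order terms give $\sum_u\mu(u)/\langle v_i,u\rangle=0$. Multiplying by suitable coefficients, summing over $i$, and using Step~1 to replace $\sum_i v_i/\langle v_i,u\rangle$ by $nu$, I expect to reach the local–global identity
$$\sum_u\mu(u)\,\frac{S(u)}{n}=\text{an explicit quantity}\le n.$$
This is presumably \eqref{eq:local-global}. If the linear-term extraction does not close up, I would instead use the bound
$$\mu(u)^{-1}\ge 1+\tfrac1n\operatorname{tr}\Bigl(\sum_j\frac{v_j\otimes v_j}{\langle v_j,u\rangle^2}\Bigr)=1+\frac{S(u)}{n},$$
which holds because the tensor is positive semidefinite. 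I would combine it with Jensen's inequality applied to an identity of the form $\sum_u\mu(u)\,\mu(u)^{-1}=\#\mathscr{E}(P)$, and then control the number of chambers.

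\textbf{Step 4: conclusion.} Given $\sum_u\mu(u)=1$ and $\sum_u\mu(u)S(u)\le n^2$, the minimum of $S$ over $\mathscr{E}(P)$ is at most $n^2$, which proves the theorem. The delicate point throughout is Step~3: extracting the correct coefficient and checking the sign of the leftover term, which should vanish exactly in the extremal (Coxeter) configurations.
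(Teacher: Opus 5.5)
Your framework is the paper's own: you average $S$ over $\mathscr{E}(P)$ with weights $\mu(u)$, normalize by interpolating $1$, and then interpolate $1/\ip{v_i}{x}$. Your degree-one coefficient in Step 3 is also correct. But Step 3 stops exactly where the proof has to happen. Write $c_{ij}=\sum_{u}\mu(u)/(\ip{v_i}{u}\ip{v_j}{u})$. Your comparison of $t^{-1}$ terms says that, for all $z$ off the hyperplanes,
\[
\frac{1}{\ip{v_i}{z}}=\frac1n\sum_{j=1}^n c_{ij}\,\frac{1}{\ip{v_j}{z}} .
\]
The missing idea is that the functions $z\mapsto 1/\ip{v_j}{z}$, $1\le j\le n$, are linearly independent because the hyperplanes $v_j^\perp$ are distinct. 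To see this, clear denominators by $\prod_k\ip{v_k}{z}$ and restrict to a point of $v_i^\perp\setminus\bigcup_{j\neq i}v_j^\perp$. Independence gives $c_{ij}=n\delta_{ij}$ outright. Taking the trace then yields the \emph{exact} identity $\sum_u\mu(u)S(u)=\sum_i c_{ii}=n^2$, which is \eqref{eq:local-global}. There is no inequality and no leftover term whose sign must be checked. The identity holds for every configuration of pairwise non-parallel vectors; what forces $S(u)=n^2$ at every $u$ in the Coxeter case is extremality, not the identity. Your proposed closure does not isolate the diagonal entries $c_{ii}$: multiplying by coefficients, summing over $i$ and substituting $\sum_i v_i/\ip{v_i}{u}=nu$ only produces vector identities. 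The critical-point equation plays no role in this extraction; it enters only through the Lagrange property.

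Your fallback cannot work either. The bound $\mu(u)^{-1}\ge 1+S(u)/n$ together with $\sum_u\mu(u)\mu(u)^{-1}=N$, the number of chambers, gives only $\sum_u\mu(u)S(u)\le n(N-1)$. Now $n$ distinct central hyperplanes in $\R^d$ with $d\ge 2$ always produce $N\ge 2n$ chambers: intersect with a generic $2$-plane, where each of the $2n$ sectors lies in a different convex chamber. So this bound is at least $n(2n-1)>n^2$ for $n\ge 2$, and no control of the chamber count can rescue it.

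Finally, the interpolation argument needs pairwise non-parallel $v_j$, while the theorem is stated for arbitrary unit vectors. You need a short reduction. Perturb to non-parallel unit vectors $v_j^\epsilon$ and pick $u^\epsilon$ with $\sum_j\ip{v_j^\epsilon}{u^\epsilon}^{-2}\le n^2$. Then $|\ip{v_j^\epsilon}{u^\epsilon}|\ge 1/n$ for every $j$. Pass to a limit point $u\in\Sp^{d-1}$, where $S$ is continuous, to get $S(u)\le n^2$.
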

	
	The bound in \eqref{eq:strong-pol} is sharp. Let us note in passing that this result is also a consequence of Theorem~\ref{thm:weigthed}   below, which provides an alternative route to the main identity in \cite{MOM}. It is attained by every orthonormal
	basis, and, as Ball and Frenkel observed, already by $n$ equally spaced
	directions on a great circle; the problem therefore encodes geometric structure
	beyond that of orthogonal configurations. We refer to \cite{MOM} for more details and connections to other relevant results.
	
	In this paper we complete the characterization of its extremizers, confirming our suspicions from \cite{MOM}.  We call $v_1,\dots,v_n$ an \emph{extremal configuration} if
	$$
	\sum_{j=1}^n\frac{1}{\langle v_j,u\rangle^{2}}\ge n^{2}
	$$
	for every $u\in\Sp^{d-1}\setminus\{P=0\}$, so that the sharp bound is attained.
	Interpolating the quantity $\Delta P/P$ against the chamber basis linearizes the
	analysis of equality and gives the following rigidity statement.
	
	\begin{introtheorem}\label{thm:introB}
		If $v_1,\dots,v_n$ is an extremal configuration for \eqref{eq:strong-pol}, then
		the product
		$$P(x)=\prod_{j=1}^{n}\langle v_j,x\rangle
		$$
		is harmonic.
	\end{introtheorem}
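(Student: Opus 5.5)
We interpolate $\Delta P/P$ in the chamber basis of Theorem~\ref{thm:introA} and read off its values at the critical points from the extremality hypothesis.

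\textbf{Step 1: $\Delta P/P\in AO(\cA)$.} Differentiating the product gives
$$
\frac{\Delta P}{P}(x)=\sum_{i\neq j}\frac{\langle v_i,v_j\rangle}{\langle v_i,x\rangle\langle v_j,x\rangle}=\Big|\sum_{j}\frac{v_j}{\langle v_j,x\rangle}\Big|^2-\sum_j\frac{1}{\langle v_j,x\rangle^2}.
$$
Each term $1/(\langle v_i,x\rangle\langle v_j,x\rangle)$ with $v_i,v_j$ independent lies in $AO(\cA)$. If $v_i=\pm v_j$ for some $i\neq j$, the term is $\pm 1/\langle v_i,x\rangle^2$, which is not obviously in $AO(\cA)$. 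Two remedies are available. One is to check that such a configuration cannot be extremal: two parallel vectors leave only $n-1$ hyperplanes, and one would compare with the bound $(n-1)^2$ applied to the reduced family. The other is to work in the slightly larger space that also contains squared reciprocals. I would try the first.

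\textbf{Step 2: values at the critical points.} At $u\in\mathscr{E}(P)$ with $|u|=1$, the Lagrange condition reads $\nabla P(u)=\lambda u\,P(u)$, that is, $\sum_j v_j/\langle v_j,u\rangle=\lambda u$. Pairing with $u$ gives $\lambda=n$. Hence
$$
\frac{\Delta P}{P}(u)=n^2-\sum_j\frac{1}{\langle v_j,u\rangle^2}\le 0
$$
by extremality.

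\textbf{Step 3: the weighted sum.} The function $\Delta P/P$ is homogeneous of degree $-2$. Averaging the interpolation formula against a suitable functional should give $\sum_u \mu(u)\,(\Delta P/P)(u)=0$. This is presumably the identity~\eqref{eq:local-global} of Theorem~\ref{thm:weigthed} specialized to degree $-2$ elements; equivalently it is the strong polarization identity $\sum_u\mu(u)\big(\sum_j\langle v_j,u\rangle^{-2}-n^2\big)=0$. Since every $\mu(u)>0$ and every term has the same sign, all the values $(\Delta P/P)(u)$ vanish. Theorem~\ref{thm:introA} then gives $\Delta P/P=\sum_u (\Delta P/P)(u)\,L_u\equiv 0$, so $\Delta P=0$ off $\{P=0\}$, and hence everywhere by polynomiality.

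\textbf{Main obstacle.} The hardest step is establishing the vanishing weighted identity of Step 3 for general, possibly degenerate, arrangements. It must be extracted from the interpolation formula, for instance by evaluating at the critical points the coefficients of the identity $1=\sum_u L_u(x)$, or by applying the formula to the degree $-2$ reciprocals $1/(\langle v_i,x\rangle\langle v_j,x\rangle)$. The duplicated-vector issue of Step 1 is the secondary obstacle.
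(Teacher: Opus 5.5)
Your argument is the paper's proof. You interpolate $\Delta P/P\in AO(\cA)$, note that $(\Delta P/P)(u)=n^2-Q(u)\le 0$ on $\mathscr{E}(P)$, use the positive-weight identity \eqref{eq:local-global} to force all these values to vanish, and conclude $\Delta P/P\equiv 0$ from Theorem~\ref{thm:basis}.

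\textbf{Step 3 is not an obstacle.} The identity you need is exactly Theorem~\ref{thm:weigthed}, which the paper proves from the chamber identities before this theorem. It also follows in one line from your own setup. Each $L_u$ has degree-zero term $\mu(u)$, while $\Delta P/P$ is homogeneous of degree $-2$. Comparing degree-zero parts in $\Delta P/P=\sum_u(\Delta P/P)(u)L_u$, or equivalently letting $x\mapsto sx$ with $s\to\infty$, gives $\sum_u\mu(u)(\Delta P/P)(u)=0$. This is the off-diagonal route you mention, and it is slightly shorter than the paper's, which goes through the diagonal identities $\sum_u\mu(u)\langle v_j,u\rangle^{-2}=n$.

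\textbf{The real gap is the parallel-vector reduction.} The paper does not prove this step here; it cites Lemma~7.6 of \cite{MOM}. Your comparison with $(n-1)^2$ does not close if you only use the existence of a point $u$ with $\sum_k\langle w_k,u\rangle^{-2}\le(n-1)^2$ for the reduced family. You then get only $Q(u)\le(n-1)^2+\langle v_i,u\rangle^{-2}$, and nothing forces the extra term below $2n-1$.

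The comparison does close if you average instead of picking a single point. Let $w_1,\dots,w_m$ be the distinct directions, with $m<n$ and multiplicities $c_k$. Theorem~\ref{thm:chamber-identities}, applied to this non-parallel family with $m$ in place of $n$, gives positive weights $\mu'$ on its critical set with
\[
\sum_u\mu'(u)=1 \quad\text{and}\quad \sum_u\mu'(u)\langle w_k,u\rangle^{-2}=m.
\]
Hence $\sum_u\mu'(u)Q(u)=m\sum_k c_k=mn<n^2$. So some unit vector $u$ off the (same) hyperplanes has $Q(u)<n^2$, and the configuration is not extremal.
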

	
	Harmonicity is a strong constraint (cf. \cite{Agranovsky2000}). Combined with the description of the extremal
	cases in~\cite{MOM}, it identifies the extremizers as \emph{finite reflection
		systems}: finite sets $\Phi\subset\Sp^{d-1}$ invariant under the reflections
	$s_v\colon x\mapsto x-2\langle v,x\rangle v$ across the hyperplanes $v^{\perp}$,
	$v\in\Phi$. The orthonormal bases and the regular planar configurations noticed
	by Ball and Frenkel are then the simplest members of a much larger family
	governed by Coxeter symmetry. Theorem~\ref{thm:introB} is a consequence of the very similar Proposition~\ref{thm:harmonicity}, and the resulting description of the equality case is given in Theorem~\ref{cor:equality} below.
	
	\begin{introtheorem}[Equality cases]\label{cor:equality}
		Let $v_1,\dots,v_n\in\Sp^{d-1}$ be unit vectors. Then
		$$
		\min_{x\in\Sp^{d-1}}
		\sum_{j=1}^{n}\frac{1}{\langle v_j,x\rangle^{2}}=n^{2}
		$$
		if and only if
		$
		\Phi=\{\pm v_1,\dots,\pm v_n\}
		$
		is a finite reflection system.
	\end{introtheorem}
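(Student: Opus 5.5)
We prove the two implications separately. Throughout we use that the lower bound $n^2$ always holds, which is the content of Theorem~\ref{conj:strong-pol}. Hence the minimum in Theorem~\ref{cor:equality} equals $n^2$ exactly when $v_1,\dots,v_n$ is an extremal configuration in the sense defined above. We tacitly assume the $\pm v_j$ are pairwise non-parallel; otherwise $P$ has a squared factor and both sides are readily seen to fail.

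\emph{Reflection systems are extremal.} Suppose $\Phi=\{\pm v_1,\dots,\pm v_n\}$ is a finite reflection system with Coxeter group $W$. Then $P$ is $W$-anti-invariant: each $s_{v_i}$ permutes the positive roots other than $v_i$ up to sign and sends $v_i\mapsto -v_i$. Thus $\Delta P$ is an anti-invariant polynomial of degree $n-2$, so it is divisible by $P$, which has degree $n$. This forces $\Delta P=0$, and $P$ is harmonic.

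Next we use the identity on the sphere coming from $\nabla P/P=\sum_j v_j/\langle v_j,x\rangle$:
$$
\frac{\Delta P}{P}=\Big|\sum_j \frac{v_j}{\langle v_j,x\rangle}\Big|^2-\sum_j\frac{1}{\langle v_j,x\rangle^2}.
$$
Harmonicity therefore gives $\sum_j \langle v_j,x\rangle^{-2}=|\nabla P/P|^2$. By Cauchy--Schwarz, $|\nabla P(x)|\ge \langle \nabla P(x),x\rangle=nP(x)$ on the sphere, using Euler's homogeneity relation. Hence $\sum_j\langle v_j,x\rangle^{-2}\ge n^2$ everywhere, and equality holds at the critical points $u\in\mathscr E(P)$. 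This direction is essentially routine.

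\emph{Extremal configurations are reflection systems.} Assume the configuration is extremal. By Theorem~\ref{thm:introB} (via Proposition~\ref{thm:harmonicity}), $P$ is harmonic. It remains to show that a product of distinct linear forms which is harmonic has a reflection-invariant zero set. The plan is as follows. Fix $i$ and restrict to the hyperplane $H=v_i^\perp$. Write $P=\langle v_i,x\rangle Q$ and expand $\Delta P=2\langle v_i,\nabla Q\rangle+\langle v_i,x\rangle\Delta Q$. On $H$ this yields $\partial_{v_i}Q=0$. Now
$$
\frac{\partial_{v_i}Q}{Q}=\sum_{j\ne i}\frac{\langle v_j,v_i\rangle}{\langle v_j,x\rangle}
$$
vanishes identically on $H$. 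We group the terms by the distinct hyperplanes $H\cap v_j^\perp$ of $H$; the functions $1/\ell$ for pairwise non-proportional linear forms $\ell$ on $H$ are linearly independent. Hence, for each codimension-two flat $X\subset H$, the sum of $\langle v_j,v_i\rangle/\langle v_j,x\rangle$ over those $j$ with $X=H\cap v_j^\perp$ vanishes. Inside the rank-two subarrangement through $X$, this is exactly the planar harmonicity condition. In the plane, a harmonic product of $m$ distinct lines must be $c\,\mathrm{Re}(e^{i\theta}z^m)$, whose lines are equally spaced, that is, a dihedral system. So every rank-two localization containing $v_i^\perp$ is dihedral. In particular $s_{v_i}$ permutes the lines of each such localization, and every $v_j$ lies in exactly one of them. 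Consequently $s_{v_i}$ preserves $\Phi$ for all $i$, which is the definition of a finite reflection system.

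The main obstacle is the localization step. We must justify carefully that the vanishing on $H$ splits flat-by-flat into genuine rank-two harmonicity conditions, and the dihedral classification in the plane also needs a careful argument. An alternative is to cite the characterization of extremal cases in~\cite{MOM}, or the result in \cite{Agranovsky2000} that harmonic products of linear forms come from Coxeter arrangements, which would give this step directly.
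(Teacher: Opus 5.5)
Your route differs from the paper's, whose proof is essentially a chain of citations: non-parallelism of extremal configurations is Lemma~7.6 of \cite{MOM}, harmonicity is Proposition~\ref{thm:harmonicity}, the converse is the observation in \cite{MOM}, and the step from a harmonic product of linear forms to a reflection system is Agranovsky's Theorem~6.1 \cite{Agranovsky2000}. Your direct proof of the ``if'' direction is correct. Anti-invariance forces $\Delta P=0$, so $\sum_j\langle v_j,x\rangle^{-2}=\lvert\nabla P/P\rvert^2\ge n^2$ on the sphere by Cauchy--Schwarz and Euler, with equality on $\mathscr E(P)$. For the sign in $P\circ s_{v_i}=-P$, note that $P/\langle v_i,\cdot\rangle$ is mapped to $\pm$ itself, agrees with its image on $v_i^\perp$, and is not identically zero there. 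For the ``only if'' direction you replace Agranovsky's theorem by rank-two localization, which would make everything self-contained. Your fallback of citing \cite{Agranovsky2000} is exactly the paper's proof.

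The localization step, however, fails as written. Fix $i$ and a codimension-two flat $X\subset v_i^\perp$. Put $E=X^\perp=\Span(v_i,e)$ with $e\perp v_i$ a unit vector, and let $P_E$ be the product of the forms $\langle v_k,\cdot\rangle$ with $v_k\in E$. Your grouped identity says only that $\sum_{j}\langle v_j,v_i\rangle/\langle v_j,e\rangle=0$, i.e.\ that $\Delta_E P_E$ vanishes on the single line $v_i^\perp\cap E$. This is not planar harmonicity. Take $v_i=e_1$, $e=e_2$ and three further normals $(\cos\alpha_j,\sin\alpha_j)$ with $\cot\alpha_j=1,2,-3$: the identity holds, yet the four lines are neither equally spaced nor $s_{e_1}$-invariant. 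So dihedrality of the localization does not follow from the information at $i$ alone.

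The repair is short. $X\subset v_k^\perp$ for every $v_k\in E$, so running your argument for each such $k$ shows that the binary form $\Delta_E P_E$, of degree $m-2$, vanishes on all $m$ lines of the localization. Hence it is divisible by $P_E$, which has degree $m$, so it is zero. Alternatively, the lowest-order Taylor term of $P$ at a generic point of $X$ is a nonzero multiple of $P_E$, and the lowest homogeneous term of a harmonic polynomial is harmonic. After this, the classification via $\mathrm{Re}(e^{i\theta}z^m)$ and the conclusion $s_{v_i}\Phi=\Phi$ go through as you say.

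Two smaller points. First, Theorem~\ref{conj:strong-pol} gives $\min\le n^2$, an upper bound, not a lower bound; the upper bound is what your reduction to extremality actually uses. Second, parallel vectors cannot be dismissed as you do:
\begin{itemize}
\item For ``only if'' you need Lemma~7.6 of \cite{MOM} or an argument. For instance, apply Lemma~\ref{thm:chamber-identities2} to the $K$ distinct directions with $t_k=m_k/n$, where $m_k$ are the multiplicities; this gives $\sum_{u}\mu(u)\sum_j\langle v_j,u\rangle^{-2}=nK<n^2$.
\item For ``if'' the right-hand side need not fail. The choice $v_1=v_2=e_1$, $v_3=e_2$ gives the reflection system $\{\pm e_1,\pm e_2\}$, yet the minimum is $(1+\sqrt2)^2<9$.
\end{itemize}
So non-parallelism must be an explicit hypothesis, a point the statement itself also leaves implicit.
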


This is an immediate consequence of our previous work and Theorem \ref{thm:introB}. Indeed, by Lemma 7.6 in~\cite{MOM} the vectors are non parallel for any extremal configuration. Proposition~\ref{thm:harmonicity} shows that $P$ is harmonic.  The characterization follows using the observation in \cite{MOM} together with Agranovsky's Theorem (cf. Theorem 6.1 
     \cite{Agranovsky2000}).

\subsection{Gaussian vector inequalities: revisited}\label{sec:revisited}

The strong polarization problem was known to imply the real polarization problem. On the other hand, another conjecture by Frenkel, known as the Gaussian Product Inequality was also known to imply the real polarization problem in the limiting case. This has recently been settled in \cite{OUI}. The main result of this section  explores the connection that brings  the two stronger conjectures together. Our approach isolates the general principle behind their proof, avoiding computations with special functions, applies to general hyperplane arrangements and avoids a couple of technicalities including the appearance of the correlation matrix all around. 

Before we proceed to state the main result we need to refresh some well-known results and introduce some notation.

\begin{definition}
A smooth function \(f:(0,\infty)\to\mathbb{R}\) is called
\emph{completely monotone}  and
\[
    (-1)^n f^{(n)}(x)\ge 0
\]
for every \(x>0\) and every integer \(n\ge 0\).
\end{definition}

Let $Z$ denote a one-dimensional standard Gaussian random variable and let
$\mathbf Z$ denote a standard Gaussian vector in a finite-dimensional Euclidean
space.  Only the vector-valued variables $\mathbf Z$ and $\mathbf t$ are written in boldface. As we shall explain later, the following result contains the Gaussian product inequality as a particular case.

Recall that a function $f:(0,\infty)\to[0,\infty)$ is called \emph{completely monotone} if $f\in C^\infty(0,\infty)$ and$$(-1)^k f^{(k)}(s)\geq 0$$for every $s>0$ and every integer $k\geq 0$.

\begin{introtheorem}[Gaussian Product Inequality for completely monotone functions]
\label{prop:main}
Let $v_1~,~\ldots~,~v_n~\in~\mathbb S^{d-1}$ be unit vectors, $\mathbf Z$ be a standard Gaussian vector in $\mathbb R^d$, and let $f_1~,~\ldots~,~f_n~:~(0,\infty)~\to~[0,\infty)$ be nonzero completely monotone functions. Then
$$
\mathbb E_{\mathbf Z}
\left(\prod_{j=1}^n
g_j\left(\frac{1}{\langle v_j,\mathbf Z\rangle^2}\right)\right)
\geq
\prod_{j=1}^n
\mathbb E_Z
g_j\left(\frac{1}{Z^2}\right),
$$
where $Z$ is a standard Gaussian random variable and the inequality is understood in $(0,\infty]$.
\end{introtheorem}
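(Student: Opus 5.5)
By Bernstein's theorem, each nonzero completely monotone $g_j$ (we read $g_j$ as the $f_j$ of the statement) is a mixture of exponentials:
$$
g_j(s)=\int_{[0,\infty)}e^{-ts}\,\dd\nu_j(t)
$$
for some nonzero positive Borel measure $\nu_j$. Every integrand is nonnegative, so Tonelli's theorem allows us to expand the product and exchange expectation and integration:
$$
\E_{\mathbf Z}\prod_{j=1}^n g_j\left(\frac{1}{\langle v_j,\mathbf Z\rangle^2}\right)=\int \E_{\mathbf Z}\exp\left(-\sum_{j=1}^n\frac{t_j}{\langle v_j,\mathbf Z\rangle^2}\right)\dd(\nu_1\otimes\dots\otimes\nu_n)(\mathbf t).
$$
The right-hand side of the theorem factors in the same way. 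It therefore suffices to prove the exponential case: for every $\mathbf t\in[0,\infty)^n$,
$$
\E_{\mathbf Z}\exp\left(-\sum_{j=1}^n\frac{t_j}{\langle v_j,\mathbf Z\rangle^2}\right)\ \ge\ \prod_{j=1}^n\E_Z\, e^{-t_j/Z^2}=\exp\left(-\sqrt2\sum_{j=1}^n\sqrt{t_j}\right).
$$
Here we use the classical Laplace transform $\E\, e^{-t/Z^2}=e^{-\sqrt{2t}}$. Integrating this pointwise inequality against $\nu_1\otimes\dots\otimes\nu_n$ gives the claim in $(0,\infty]$. We may assume all $t_j>0$, since a vanishing $t_j$ simply removes the $j$-th factor, and by continuity we may also assume $P\not\equiv0$.

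To prove the exponential inequality we follow the Ouimet--Greaves mechanism, with the correlation matrix replaced by the arrangement $\cA$. Set $s_j=\sqrt{2t_j}$ and split $\R^d\setminus\{P=0\}$ into the chambers $C$ of $\cA$. On each chamber consider the potential
$$
\Psi(z)=\frac{|z|^2}{2}-\sum_j s_j\log|\langle v_j,z\rangle|.
$$
It is strictly convex and tends to $+\infty$ at $\partial C$ and at infinity. Hence $y=\nabla\Psi(z)=z-\sum_j s_j v_j/\langle v_j,z\rangle$ is a diffeomorphism of $C$ onto $\R^d$, with Jacobian
$$
\det\left(I+\sum_j s_j\frac{v_j\otimes v_j}{\langle v_j,z\rangle^2}\right).
$$
Inverting this change of variables in each chamber expresses $\E_{\mathbf Z}\exp(-\sum_j t_j/\langle v_j,\mathbf Z\rangle^2)$ as $e^{-\sum_j s_j}$ times a Gaussian average over $y$. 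The integrand is a ratio of Jacobian-type determinants summed over the preimages $z_C(y)$, one in each chamber. Writing $y=\lambda\theta$, this is the same representation in which the zeros of $\nabla\Psi$ are the weighted analogues of the points of $\mathscr E(P)$, and the determinants are the weighted analogues of $\mu(u)^{-1}$.

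The key step is then a Jensen argument. For each fixed direction the chamber weights form a probability vector, by the weighted version of the chamber identity of Theorem~\ref{thm:chamber-identities}. This is the interpolation formula of Theorem~\ref{thm:introA} applied to the constant function for the arrangement with multiplicities $s_j$, for which the proof of Theorem~\ref{thm:basis} should go through verbatim. The identity \eqref{eq:local-global}, which expresses the average over chambers of the relevant local quantity as the global one, turns the Jensen bound into exactly $e^{-\sum_j s_j}$.

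I expect the main obstacle to be the exponent bookkeeping in the change of variables. Expanding $|y|^2$ produces the diagonal terms $\sum_j s_j^2/\langle v_j,z\rangle^2=2\sum_j t_j/\langle v_j,z\rangle^2$ and the constant $-2\sum_j s_j$. It also produces cross terms
$$
\sum_{i\ne k}\frac{s_is_k\langle v_i,v_k\rangle}{\langle v_i,z\rangle\langle v_k,z\rangle}
$$
of indefinite sign. These must either be absorbed into the determinant weight or be shown to average to zero over chambers; the latter is exactly what the chamber identities for the reciprocals $1/\langle v_i,x\rangle$ are designed to give. I would first check whether these identities kill the cross terms after the Jensen step. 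Failing that, I would use the inequality $e^{a}\ge 1+a$ on those terms together with the same identities.
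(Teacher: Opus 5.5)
Your argument is correct and follows the paper's skeleton. It has three parts: a Bernstein reduction to the exponential kernel bound of Lemma~\ref{lem:reciprocal-kernel} (your $s_j=\sqrt{2t_j}$ are the paper's $t_j$), the chamberwise substitution $y=T(z)$ of Lemmas~\ref{lem:branches} and~\ref{lem:cov}, and a Jensen step closed by chamber identities. Your strictly convex potential $\Psi$, which is proper on each chamber, supplies the proof of Lemma~\ref{lem:branches} that the paper leaves to the reader.

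The real difference is where Jensen is applied. The paper glues the fibres into the tilted measure $\nu$ of \eqref{eq:nu}. In Lemma~\ref{lem:tilt} it shows $\nu$ is a probability measure under which the reciprocals are integrable, which is where the diagonal case of \eqref{eq:chamber-2} is needed, and then applies Jensen once to $\E_\nu$. You apply Jensen on each fibre to the finite probability vector $\{\mu(u)\}_{u\in\mathscr E_y(P;\mathbf t)}$ and only then integrate against $\gamma_d$. This is a conditional form of the same inequality: it needs no integrability and uses only \eqref{eq:chamber-0} and the off-diagonal case of \eqref{eq:chamber-2}. Likewise, your direct Tonelli argument in $(0,\infty]$ makes the class $\mathcal G$, condition \eqref{eq:finiteness} and the approximation step unnecessary. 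In exchange, the paper's version isolates a reusable probabilistic statement, the moment identities of Lemma~\ref{lem:tilt}, in the form used by Ouimet and Greaves.

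Three points in your write-up should be stated precisely rather than hedged.

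First, the diagonal terms cancel identically. The fibre integrand is exactly $\mu(u)e^{X(u)}$ with $X(u)=\frac12\sum_{i\neq k}s_is_k\ip{v_i}{v_k}/(\ip{v_i}{u}\ip{v_k}{u})$, and $\sum_u\mu(u)X(u)=0$ is exactly the off-diagonal identity. So neither your planned check nor the fallback $e^a\ge 1+a$ is needed. Note that \eqref{eq:local-global} itself concerns only the fibre $y=0$ with $t_j=1/n$, where $\norm{\sum_j v_j/\ip{v_j}{u}}=n$ converts the cross terms into the diagonal sum; it is not the identity you cite it as.

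Second, the identities are needed on every fibre $T^{-1}(y)$ (this is Lemma~\ref{thm:chamber-identities2}), not at the zeros of $\nabla\Psi$, and the polar decomposition $y=\lambda\theta$ plays no role. They hold because the subtraction in Proposition~\ref{prop:lagrange} eliminates $y$, and each fibre has exactly one point per chamber.

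Finally, ``$P\not\equiv0$'' is vacuous. What the identities require is that no two $v_j$ are parallel. You can arrange this by perturbation, since the left side is continuous in the $v_j$ by dominated convergence. Alternatively, merge $v_i=\pm v_k$ into a single term with parameter $t_i+t_k$ and use $\sqrt{a+b}\le\sqrt a+\sqrt b$.
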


 Our proof is a variant of theirs with the aid of our analysis based on Aomoto spaces (cf. Lemma \ref{thm:chamber-identities2} below). The statement of Theorem \ref{prop:main} although aesthetic might be too cryptical. For that purpose let us deduce a few particular cases including the recent proof of the Gaussian Product Inequality by Ouimet and Greaves in \cite{OUI}. Upon inspection of their proof it seem natural to the authors to try to follow it upon the hypothesis on the functions involved to be Laplace transforms of some sort.

The Gaussian Product Inequality by taking
$$
f_j(s)=s^{-\alpha_j}.
$$
More generally, by Bernstein-Hausdorff-Widder's theorem, one may take
$$
f_j(s)=\int_{0}^{\infty}e^{-ts}d\mu_j(t)
$$
for arbitrary nonzero positive measures $\mu_j$ for which the integral is finite for every $s>0$. For another curious product inequality we refer to Corollary \ref{corr} below.

	\paragraph{Organization of the paper:} Section~\ref{sec:chamber-basis} introduces the Aomoto space and some of the notation and auxiliary properties we shall need. We then prove Theorem~\ref{thm:introA}, which contains the interpolation formula, as Theorem~\ref{thm:basis} below. In that section we state the chamber identities in Theorem~\ref{thm:chamber-identities}. These are then used to derive Theorem~\ref{thm:weigthed}, which includes an identity that was crucial in our previous work \cite{MOM} and in \cite{OUI}.
	Section~\ref{sec:equality} is devoted to the proof of Theorem \ref{thm:introB}. Finally, in Section~\ref{sec:setting} we introduce the changes in the setting that generalize straighforwardly in order to prove Theorem \ref{prop:main} in Section \ref{sec:proofD}.

	\section{A chamber basis for the Aomoto space}
	\label{sec:chamber-basis}
	
Let $v_1,\dots,v_n\in\Sp^{d-1}$ be unit vectors, no two of which are
	parallel. These vectors define the finite central hyperplane arrangement
	$$
	\cA=\{v_1^{\perp},\dots,v_n^{\perp}\},
	$$
	whose \emph{chambers} are the connected components of
	$\R^d\setminus\bigcup_{j=1}^{n}v_j^{\perp}$. The \emph{Aomoto space} of $\cA$ is defined as the finite-dimensional space of rational
	functions
	$$
	AO(\cA)
	=
	\Span\left\{\,
	\prod_{j\in S}\frac{1}{\langle v_j,x\rangle}
	\ :v_j\text{ linearly independent for every $S\subseteq\{1,\dots,n\}$}
	\,\right\}.
	$$
	As proved in \cite{MOM}, each chamber of $\cA$ contains exactly one point of the set of extremal points
	$$
	\mathscr{E}(P)
	=
	\left\{\,
	u\in\R^d
	\ :\
	u=\frac1n\sum_{j=1}^{n}\frac{v_j}{\langle v_j,u\rangle}
	\,\right\},
	$$
    where $P$ is defined by equation \eqref{eq:P}. It follows then that
	\begin{equation}\label{eq:OA}
	\lvert\mathscr{E}(P)\rvert
	=
	\#\{\text{chambers of }\cA\}
	=
	\dim AO(\cA),
    \end{equation}
	by the dimension theorem of Orlik and Terao \ref{thm:orlikterao} (cf. \cite{orlik-terao}). 	
    
    In this paper we exhibit an isomorphism of $AO(\cA)$ with $\mathbb{R}^{|\mathscr{E}(P)|}$, together
	with the interpolation formula it induces. The construction rests on two objects
	attached to the hyperplane arragement $\cA$.
	
	It is known that the local extrema of the polynomial $P$ on $\Sp^{d-1}$ at which $P$ does not vanish are, automatically,  unit vectors. 
    
    Let us check that the functions $L_u$ lie in the Aomoto space.
	
	\begin{lemma}\label{lem:membership}
		For every $u\in\mathscr{E}(P)$ we have $L_u\in AO(\cA)$.
	\end{lemma}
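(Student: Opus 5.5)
We will expand the determinant defining $L_u$ with the Cauchy--Binet formula, which turns it into a linear combination of reciprocals of products of linear forms over linearly independent index sets. Write
$$
I+\frac1n\sum_{j=1}^{n}\frac{v_j\otimes v_j}{\langle v_j,u\rangle\langle v_j,x\rangle}=I+AB^{T},
$$
where $A$ is the $d\times n$ matrix with columns $a_j=v_j/(n\langle v_j,u\rangle)$ and $B$ is the $d\times n$ matrix with columns $b_j=v_j/\langle v_j,x\rangle$. The identity $\det(I_d+AB^T)=\det(I_n+B^TA)$, or more directly the expansion of $\det(I+M)$ as the sum of principal minors of $M$, gives
$$
\det(I+AB^T)=\sum_{S\subseteq\{1,\dots,n\}}\det\left((B^TA)_{S,S}\right).
$$

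Fix a subset $S$. The entries of the principal submatrix are $(B^TA)_{ij}=\langle v_i,v_j\rangle/(n\langle v_i,x\rangle\langle v_j,u\rangle)$ for $i,j\in S$. We factor the row scalars $1/\langle v_i,x\rangle$ and the column scalars $1/(n\langle v_j,u\rangle)$ out of the determinant, which gives
$$
\det\left((B^TA)_{S,S}\right)=\frac{\det G_S}{n^{|S|}\prod_{j\in S}\langle v_j,u\rangle}\,\prod_{i\in S}\frac{1}{\langle v_i,x\rangle}.
$$
Here $G_S=(\langle v_i,v_j\rangle)_{i,j\in S}$ is the Gram matrix of the vectors indexed by $S$.

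The key observation is that $\det G_S\neq 0$ exactly when $\{v_j\}_{j\in S}$ is linearly independent. Every dependent subset therefore contributes zero, and only independent subsets survive. In particular this forces $|S|\le d$. The surviving terms are constants times $\prod_{j\in S}1/\langle v_j,x\rangle$ with $S$ independent, and these are precisely the spanning functions of $AO(\cA)$. The empty set contributes the constant $1$, which also lies in $AO(\cA)$ as the empty product.

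The coefficients are finite real numbers because $u\in\mathscr E(P)$ lies in an open chamber, so $\langle v_j,u\rangle\neq0$ for every $j$. Multiplying by the scalar $\mu(u)$ keeps us inside the span, and hence $L_u\in AO(\cA)$.

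The only step that needs care is the principal-minor expansion together with the Gram-determinant factorization. Its crux is that dependent index sets are killed by the vanishing Gram determinant. We expect no real obstacle beyond stating the expansion cleanly, for instance via $\det(I+M)=\sum_S\det M_{S,S}$ applied to the $n\times n$ matrix $M=B^TA$.
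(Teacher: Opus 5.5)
Your proposal is correct and follows essentially the same route as the paper. The paper quotes the expansion $\det\bigl(I+\sum_j t_j\,v_j\otimes v_j\bigr)=\sum_S \det G_S\prod_{j\in S}t_j$ with $t_j=1/(n\langle v_j,u\rangle\langle v_j,x\rangle)$ and lets the vanishing Gram determinants discard the dependent subsets. Your factorization $I+AB^{T}$, combined with the principal-minor expansion of $\det(I_n+B^{T}A)$, simply derives that same expansion explicitly.
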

	
	\begin{proof}
		For arbitrary scalars $t_1,\dots,t_n$, a determinant expansion gives
$$\det\left(I+\sum_{j=1}^{n}t_j\,v_j\otimes v_j\right)
		=
		\sum_{S\subseteq\{1,\dots,n\}}
		\det\bigl(\langle v_j,v_k\rangle\bigr)_{j,k\in S}
		\prod_{j\in S}t_j .
		$$
        Applying this with
		$$t_j=\frac{1}{n\,\langle v_j,u\rangle\langle v_j,x\rangle}$$ 
    concludes the proof if the products are reciprocals of $\langle v_i,x\rangle$ with linearly independent $v_i$. Note that the  Gram determinant $\det(\langle v_j,v_k\rangle)_{j,k\in S}$ vanishes whenever the vectors $(v_j)_{j\in S}$ are linearly dependent.
	\end{proof}
	
	The defining equations of the critical points make the functions $L_u$ behave
	like Lagrange cardinal functions.
	
	\begin{proposition}[Lagrange property]\label{prop:lagrange}
		For all $u,w\in\mathscr{E}(P)$,
		$$
		L_u(w)=\delta_{u,w}.
		$$
	\end{proposition}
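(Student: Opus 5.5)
The plan is to treat the diagonal and off-diagonal cases separately. In both cases we evaluate the determinant defining $L_u$ directly at a point $w\in\mathscr{E}(P)$, using only the fixed-point equation that defines $\mathscr{E}(P)$. Throughout we use that every point of $\mathscr{E}(P)$ lies in a chamber of $\cA$, so $\langle v_j,w\rangle\neq 0$ for all $j$ and $L_u(w)$ is well defined.

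For the diagonal case $w=u$, the matrix inside the determinant of $L_u(u)$ is exactly
$$I+\frac1n\sum_{j=1}^n\frac{v_j\otimes v_j}{\langle v_j,u\rangle^{2}}.$$
Its determinant is $\mu(u)^{-1}$ by the definition of $\mu(u)$. Hence $L_u(u)=\mu(u)\mu(u)^{-1}=1$.

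For the off-diagonal case $w\neq u$, write $a_j=\langle v_j,u\rangle$ and $b_j=\langle v_j,w\rangle$, and let
$$M=I+\frac1n\sum_{j=1}^n\frac{v_j\otimes v_j}{a_jb_j},$$
so that $L_u(w)=\mu(u)\det M$. I will show that $M$ is singular by exhibiting the nonzero kernel vector $u-w$. Using $(v_j\otimes v_j)(u-w)=(a_j-b_j)v_j$ and the identity $\frac{a_j-b_j}{a_jb_j}=\frac1{b_j}-\frac1{a_j}$, we get
$$M(u-w)=(u-w)+\frac1n\sum_{j=1}^n\frac{v_j}{\langle v_j,w\rangle}-\frac1n\sum_{j=1}^n\frac{v_j}{\langle v_j,u\rangle}.$$
By the defining equation of $\mathscr{E}(P)$, applied to both $w$ and $u$, this equals $(u-w)+w-u=0$. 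Therefore $\det M=0$, and so $L_u(w)=0$.

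There is no real obstacle here. The only point needing care is the partial-fraction identity above, which converts the bilinear weights $1/(a_jb_j)$ into a difference of the two critical-point sums. This identity is the reason $L_u$ is defined with the mixed denominator $\langle v_j,u\rangle\langle v_j,x\rangle$.
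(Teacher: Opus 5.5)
Your proposal is correct and follows essentially the same route as the paper. The diagonal case is read off from the definition of $\mu(u)$, and for $u\neq w$ you show, as the paper does by subtracting the two fixed-point equations, that $u-w$ is a nonzero kernel vector of $I+\frac1n\sum_j \frac{v_j\otimes v_j}{\langle v_j,u\rangle\langle v_j,w\rangle}$, so $L_u(w)=0$.
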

	
	\begin{proof}
    Let us observe first that if $w=u$ the definition of $\mu(u)$ gives $$L_u(u)=\mu(u)\,\mu(u)^{-1}=1.$$ To prove complete the proof let us substract the defining equations of $u$ and $w$ gives
		\begin{equation}
		    \label{eq:u-w}
            u-w
		=
		\frac1n\sum_{j=1}^{n}
		\left(\frac{1}{\langle v_j,u\rangle}-\frac{1}{\langle v_j,w\rangle}\right)v_j .
		\end{equation}
		We will rewrite it using
		$$
		\frac{1}{\langle v_j,u\rangle}-\frac{1}{\langle v_j,w\rangle}
		=
		-\frac{\langle v_j,u-w\rangle}{\langle v_j,u\rangle\langle v_j,w\rangle}$$
		Indeed, it is now evident that  \eqref{eq:u-w} is equivalent to
        $$
		\left(
		I+\frac1n\sum_{j=1}^{n}
		\frac{v_j\otimes v_j}{\langle v_j,u\rangle\langle v_j,w\rangle}
		\right)(u-w)=0 .
		$$
		If $u\neq w$ are different, the vector $u-w$ is a nonzero element of the kernel of the
		operator in the parentheses, which is therefore singular, which yields the vanishing $L_u(w)=0$. 
	\end{proof}
	
	\begin{theorem}[Interpolation formula]\label{thm:basis}
		The family $\{L_u:u\in\mathscr{E}(P)\}$ is a basis of $AO(\cA)$. Moreover, every
		$f\in AO(\cA)$ we can express it as
		$$
		f(x)=\sum_{u\in\mathscr{E}(P)}f(u)\,L_u(x).
		$$
	\end{theorem}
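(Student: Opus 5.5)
The argument is linear algebra: combine the Lagrange property (Proposition~\ref{prop:lagrange}) with the dimension count \eqref{eq:OA}. By Lemma~\ref{lem:membership}, every $L_u$ with $u\in\mathscr{E}(P)$ lies in $AO(\cA)$. So the family $\{L_u\}$ consists of $|\mathscr{E}(P)|$ elements of a space whose dimension, by Theorem~\ref{thm:orlikterao} together with the fact from \cite{MOM} that each chamber contains exactly one point of $\mathscr{E}(P)$, is also $|\mathscr{E}(P)|$. It therefore suffices to prove linear independence.

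For independence, suppose $\sum_{u\in\mathscr{E}(P)}c_u L_u=0$ as a rational function. Every $w\in\mathscr{E}(P)$ lies off all hyperplanes $v_j^\perp$, since $\langle v_j,w\rangle\neq 0$ is forced by the defining equation. Hence every element of $AO(\cA)$, and in particular every $L_u$, is regular at $w$. Evaluating the relation at $w$ and using $L_u(w)=\delta_{u,w}$ gives $c_w=0$. Thus the family is independent, and by the dimension count it is a basis.

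For the interpolation formula, take $f\in AO(\cA)$ and write $f=\sum_u c_u L_u$ using the basis. Evaluating at $w\in\mathscr{E}(P)$, which is legitimate because $f$ is regular there, gives $f(w)=c_w$ by the Lagrange property. This yields $f=\sum_u f(u)L_u$. Equivalently, the evaluation map $AO(\cA)\to\R^{\mathscr{E}(P)}$, $f\mapsto (f(u))_u$, is a linear isomorphism whose inverse sends the standard basis vectors to the $L_u$.

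The main obstacle is not really in this argument but in its inputs. The first is the equality $|\mathscr{E}(P)|=\dim AO(\cA)$, which requires both the one-critical-point-per-chamber result of \cite{MOM} and the Orlik--Terao theorem. The second is the care needed in the standing assumption that no two $v_j$ are parallel. That assumption ensures the arrangement's chamber count matches the indexing set, and that the rational functions are regular at the points of $\mathscr{E}(P)$.
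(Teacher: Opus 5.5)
Your proposal is correct and follows the paper's proof: linear independence from evaluating at each $w\in\mathscr{E}(P)$ via the Lagrange property, the basis property from the dimension count \eqref{eq:OA}, and the coefficients $c_w=f(w)$ by evaluating again. Your added remarks that each $L_u$ lies in $AO(\cA)$ and that every element of $AO(\cA)$ is regular at the points of $\mathscr{E}(P)$ are details the paper leaves implicit.
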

	
	\begin{proof}
		The functions $L_u$ are linearly independent: if $$\sum_{u\in\mathscr{E}(P)}c_u L_u=0,$$
        then
		evaluating at any $w\in\mathscr{E}(P)$ and using
		Proposition~\ref{prop:lagrange} gives $c_w=0$. By
		identity \ref{eq:OA} they form a basis. Given
		$f\in AO(\cA)$, write $f=\sum_u c_u L_u$; evaluating at $w$ and using
		$L_u(w)=\delta_{u,w}$ gives $c_w=f(w)$, which is the stated formula.
	\end{proof}
	
	The interpolation formula reduces identities between rational functions in
	$AO(\cA)$ to statements about their values at the finitely many critical points.
	We record the identities obtained by interpolating the constant function and the
	individual reciprocals $1/\langle v_i,x\rangle$.
	
	\begin{theorem}[Chamber identities]\label{thm:chamber-identities}
		The weights $\mu(u)$, $u\in\mathscr{E}(P)$, satisfy
		$$
		\sum_{u\in\mathscr{E}(P)}\mu(u)=1,\quad 
		\sum_{u\in\mathscr{E}(P)}\frac{\mu(u)}{\langle v_i,u\rangle}=0
		\qquad\text{and}\qquad
		\sum_{u\in\mathscr{E}(P)}
		\frac{\mu(u)}{\langle v_i,u\rangle\langle v_j,u\rangle}
		=
		n\,\delta_{ij}
		$$
        for all $1\le i,j\le n$. 
        \end{theorem}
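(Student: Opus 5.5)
The plan is to apply the interpolation formula of Theorem~\ref{thm:basis} to two simple elements of $AO(\cA)$: the constant function $1$ (the term $S=\emptyset$) and the reciprocals $1/\langle v_i,x\rangle$ (singletons are trivially linearly independent). We then expand each $L_u$ using the Gram-determinant expansion from the proof of Lemma~\ref{lem:membership}, namely
$$
L_u(x)=\mu(u)\sum_{S}\det\bigl(\langle v_j,v_k\rangle\bigr)_{j,k\in S}\prod_{j\in S}\frac{1}{n\,\langle v_j,u\rangle\langle v_j,x\rangle},
$$
and compare the two sides of each resulting identity degree by degree in $x$.

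The first step is to justify the degree-by-degree comparison. Each summand indexed by $S$ is homogeneous of degree $-|S|$ in $x$. A finite sum $\sum_k h_k$ of rational functions, with $h_k$ homogeneous of degree $-k$, vanishes identically only if each $h_k$ does. To see this, replace $x$ by $tx$: this gives $\sum_k t^{-k}h_k(x)=0$ for all $t>0$, a polynomial identity in $1/t$, so every coefficient vanishes. We also need that $1/\langle v_1,x\rangle,\dots,1/\langle v_n,x\rangle$ are linearly independent. Suppose $\sum_j c_j/\langle v_j,x\rangle=0$. Multiply by $\langle v_k,x\rangle$ and restrict to a generic point of $v_k^{\perp}$. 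Because no two $v_j$ are parallel, no other $\langle v_j,x\rangle$ vanishes identically on $v_k^{\perp}$, and the restriction yields $c_k=0$. The degree-$(-1)$ part of the expansion is $\sum_j a_j/\langle v_j,x\rangle$, where the $1\times1$ Gram determinants equal $\langle v_j,v_j\rangle=1$ because the $v_j$ are unit vectors.

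Take $f\equiv 1$, so that $f(u)=1$ for all $u$. The degree-$0$ part of $\sum_u L_u(x)$ comes only from $S=\emptyset$ and equals $\sum_u\mu(u)$. Matching it with the constant $1$ gives $\sum_u\mu(u)=1$. Its degree-$(-1)$ part is
$$
\frac1n\sum_i\frac{1}{\langle v_i,x\rangle}\sum_u\frac{\mu(u)}{\langle v_i,u\rangle},
$$
and it must vanish. Linear independence of the reciprocals then gives $\sum_u\mu(u)/\langle v_i,u\rangle=0$.

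Next take $f=1/\langle v_i,x\rangle$, so that $f(u)=1/\langle v_i,u\rangle$. The degree-$(-1)$ part of $\sum_u f(u)L_u(x)$ is
$$
\frac1n\sum_j\frac{1}{\langle v_j,x\rangle}\sum_u\frac{\mu(u)}{\langle v_i,u\rangle\langle v_j,u\rangle},
$$
and it must equal $1/\langle v_i,x\rangle$. Comparing coefficients of the independent functions $1/\langle v_j,x\rangle$ yields $\sum_u\mu(u)/(\langle v_i,u\rangle\langle v_j,u\rangle)=n\,\delta_{ij}$. As a consistency check, the degree-$0$ part of this same identity recovers the second chamber identity.

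The only point requiring care is the coefficient comparison. The products $\prod_{j\in S}1/\langle v_j,x\rangle$ for $|S|\ge2$ satisfy Orlik--Terao relations, so they cannot be compared directly. The plan avoids this by reading off only the degrees $0$ and $-1$, where the required independence is elementary. The non-parallel hypothesis is used exactly at that step.
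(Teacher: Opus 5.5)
Your proposal is correct and follows the paper's proof essentially step for step. It uses the same determinant expansion of $L_u$, the same homogeneity argument to separate degrees, the same restriction-to-$v_k^{\perp}$ argument for linear independence of the reciprocals, and interpolation of $f\equiv 1$ and $f=1/\langle v_i,x\rangle$. The only harmless variation is that you get $\sum_u\mu(u)/\langle v_i,u\rangle=0$ from the degree $-1$ part of the expansion of $1$. The paper instead reads it off the degree $0$ part of the expansion of $1/\langle v_i,x\rangle$, which you also note as a consistency check.
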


  The first chamber identity corresponds to Theorem 1.3 as in 
Aomoto and Forrester \cite{AomotoForrester}. This connection is not a suprise to the authors as they already dug into the residue formulas behind its proof while working towards the completion of \cite{MOM}. It is a surprise, thought, that they were not able to locate it back then. This connection also seems to have been unintentionally overlooked by Ouimet and Greaves in \cite{OUI}.

	\begin{proof}
		Expanding the determinant in the definition of $L_u$ as in the proof of
		Lemma~\ref{lem:membership} and grouping the terms by reciprocal degree,
		$$
		L_u(x)
		=
		\mu(u)
		\left(
		1+\frac1n\sum_{j=1}^{n}\frac{1}{\langle v_j,u\rangle\langle v_j,x\rangle}
		+R_u(x)
		\right),
		$$
		where $R_u$ collects the reciprocal products of degree at least two. We first justify that the relevant coefficients may be compared. Suppose
	that
	\[
F_0(x)+F_1(x)+\cdots+F_d(x)=0,
	\]
	where $F_k(sx)=s^{-k}F_k(x)$ for every $s>0$. For a fixed $x$ outside the
	hyperplanes, replacing $x$ by $sx$ gives a polynomial identity in $s^{-1}$;
	hence $F_k(x)=0$ for every $k$. Moreover, the functions
	\[
		\frac1{\ip{v_1}{x}},\ldots,
		\frac1{\ip{v_n}{x}}
	\]
	are linearly independent. Indeed, if
	\[
		\sum_{j=1}^n\frac{a_j}{\ip{v_j}{x}}=0,
	\]
	then multiplication by $\prod_{k=1}^n\ip{v_k}{x}$ gives a polynomial
	identity. For a fixed $i$, restrict this identity to a point of
	\[
		v_i^\perp\setminus\bigcup_{j\neq i}v_j^\perp.
	\]
	Such a point exists because the hyperplanes are distinct. All terms vanish except the one containing $a_i$, 	therefore $a_i=0$. Since $i$ is arbitrary, the claim follows.

		Applying the interpolation formula of Theorem~\ref{thm:basis} to $f=1$ and
		comparing the terms of reciprocal degree zero gives $\sum_{u}\mu(u)=1$.
		
		Fix $1\le i\le n$ and apply the same argument to $f(x)=1/\langle v_i,x\rangle$, so that
		$$
		\frac{1}{\langle v_i,x\rangle}
		=
		\sum_{u\in\mathscr{E}(P)}\frac{1}{\langle v_i,u\rangle}L_u(x).
		$$
		The terms of reciprocal degree zero give
		$\sum_{u}\mu(u)/\langle v_i,u\rangle=0$, while those of reciprocal degree one
		give
		$$
		\frac{1}{\langle v_i,x\rangle}
		=
		\frac1n\sum_{j=1}^{n}
		\left(
		\sum_{u\in\mathscr{E}(P)}
		\frac{\mu(u)}{\langle v_i,u\rangle\langle v_j,u\rangle}
		\right)
		\frac{1}{\langle v_j,x\rangle}.
		$$
		By linear independence of the $1/\langle v_j,x\rangle$ yields
		$\sum_{u}\mu(u)/(\langle v_i,u\rangle\langle v_j,u\rangle)=n\,\delta_{ij}$.
        \end{proof}

The same method provides more identities, including one that was originally proved with the aid of the Euler-Jacobi vanishing theorem in \cite{MOM} and used there to prove several polarization inequalities.
        \begin{theorem}    \label{thm:weigthed}        
        With our previous notation the following identities hold:
		$$
		\sum_{u\in\mathscr{E}(P)}\mu(u)\,u=0
		\qquad\text{and}\qquad
		\sum_{u\in\mathscr{E}(P)}\mu(u)\,u\otimes u
		=
		\frac1n\sum_{j=1}^{n}v_j\otimes v_j .
		$$
		Furthermore,
		\begin{equation}\label{eq:local-global}
			\sum_{u\in\mathscr{E}(P)}
			\left(\sum_{j=1}^{n}\frac{1}{\langle v_j,u\rangle^{2}}-n^{2}\right)\mu(u)=0 .
		\end{equation}
	\end{theorem}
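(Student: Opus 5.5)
We derive all three identities from Theorem~\ref{thm:chamber-identities}, using only the defining equation of the critical points,
$$
u=\frac1n\sum_{j=1}^n\frac{v_j}{\langle v_j,u\rangle},\qquad u\in\mathscr{E}(P).
$$

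\emph{First identity.} Multiply the defining equation by $\mu(u)$ and sum over $u$:
$$
\sum_u\mu(u)\,u=\frac1n\sum_{j}\Bigl(\sum_u\frac{\mu(u)}{\langle v_j,u\rangle}\Bigr)v_j .
$$
The inner sum vanishes by the second chamber identity, so $\sum_u\mu(u)\,u=0$.

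\emph{Second identity.} Take the tensor product of the defining equation with itself:
$$
u\otimes u=\frac1{n^2}\sum_{i,j}\frac{v_i\otimes v_j}{\langle v_i,u\rangle\langle v_j,u\rangle}.
$$
Weight by $\mu(u)$, sum over $u$, and apply the third chamber identity $\sum_u\mu(u)/(\langle v_i,u\rangle\langle v_j,u\rangle)=n\delta_{ij}$. This gives
$$
\sum_u\mu(u)\,u\otimes u=\frac1{n^2}\sum_j n\,v_j\otimes v_j=\frac1n\sum_j v_j\otimes v_j .
$$
This step is purely algebraic. The one point to check is that the third chamber identity was proved for all pairs $i,j$, including $i\ne j$, which it was.

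\emph{Identity \eqref{eq:local-global}.} Set $i=j$ in the third chamber identity and sum over $j$:
$$
\sum_u\mu(u)\sum_j\frac1{\langle v_j,u\rangle^2}=n\cdot n=n^2 .
$$
The first chamber identity gives $n^2\sum_u\mu(u)=n^2$. Subtracting the two yields \eqref{eq:local-global}.

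We also record a consistency check, not needed for the argument. Taking the trace of the second identity gives $\sum_u\mu(u)|u|^2=1$, which agrees with the first chamber identity because critical points are unit vectors. Indeed, pairing the defining equation with $u$ gives $|u|^2=\frac1n\sum_j 1=1$.

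We expect no real obstacle here: all the work sits in Theorem~\ref{thm:chamber-identities}. The only subtlety is bookkeeping the factors of $n$. We use $\frac1{n^2}$ from squaring the defining equation against the factor $n$ in the third chamber identity, and in \eqref{eq:local-global} we use the diagonal case summed over $n$ indices.
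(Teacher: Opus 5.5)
Your proposal is correct and follows the paper's proof essentially line by line. Both arguments multiply the defining equation $u=\frac1n\sum_j v_j/\langle v_j,u\rangle$, and its tensor square, by $\mu(u)$, sum over $u$, and apply the second and third chamber identities. Both then obtain \eqref{eq:local-global} from the diagonal case of the third identity summed over $j$, combined with $\sum_u\mu(u)=1$; your trace consistency check is a correct extra not in the paper.
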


        \begin{proof}
		The equation  that defines $\mathscr{E}(P)$, namely
        $$u=\frac1n\sum_{j=1}^n\frac{v_j}{\langle v_j,u\rangle}$$ provides more identities. Indeed,
		$$
		\sum_{u\in\mathscr{E}(P)}\mu(u)\,u
		=
		\frac1n\sum_{j=1}^{n}v_j
		\sum_{u\in\mathscr{E}(P)}\frac{\mu(u)}{\langle v_j,u\rangle}
		=
		0,
		$$
		by the second chamber identity in Theorem \ref{thm:chamber-identities}.
        Similarly,
		$$
		\sum_{u\in\mathscr{E}(P)}\mu(u)\,u\otimes u
		=
		\frac{1}{n^{2}}\sum_{j,k=1}^{n}v_j\otimes v_k
		\sum_{u\in\mathscr{E}(P)}\frac{\mu(u)}{\langle v_j,u\rangle\langle v_k,u\rangle}
		=
		\frac1n\sum_{j=1}^{n}v_j\otimes v_j 
		$$
		by the third chamber identity in Theorem \ref{thm:chamber-identities}. Finally, putting $i=j$ in the third chamber identity in Theorem \ref{thm:chamber-identities} and summing over $j$
		gives 
        $$
        \sum_{u\in \mathscr{E}(P)}\mu(u)\sum_{j=1}^n\frac{1}{\langle v_j,u\rangle^{2}}=n^{2},
        $$
        which together
		with the first chamber identity,  $\sum_{u}\mu(u)=1$, yields \eqref{eq:local-global}.
	\end{proof}


    \section{Proof of Theorem \ref{thm:introB}}
	\label{sec:equality}
	
	We now use the chamber basis to characterize the extremizers of the strong
	polarization inequality. By Lemma 7.6 in~\cite{MOM} the vectors of extremal configurations are non parallel for any extremal configuration. This allows us to reduce our proof to the following

    \begin{proposition}[Harmonicity of extremizers]\label{thm:harmonicity}
		Let $v_1,\dots,v_n\in\Sp^{d-1}$ be unit vectors, no two of which are parallel,
		and set
		$$
		P(x)=\prod_{j=1}^{n}\langle v_j,x\rangle.
		$$
		If $v_1,\dots,v_n$ is an extremal configuration for the strong polarization
		inequality, then $P$ is harmonic.
	\end{proposition}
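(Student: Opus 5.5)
The plan is to interpolate the rational function $\Delta P/P$ in the chamber basis of Theorem~\ref{thm:basis} and show that all of its interpolation values vanish. Write $\ell_j(x)=\langle v_j,x\rangle$. Since $P=\prod_j\ell_j$, logarithmic differentiation gives $\nabla P/P=\sum_j v_j/\ell_j$. Differentiating once more and using $\lvert v_j\rvert=1$ gives
$$
\frac{\Delta P}{P}(x)=\Bigl\lvert\sum_{j=1}^n\frac{v_j}{\ell_j(x)}\Bigr\rvert^{2}-\sum_{j=1}^n\frac{1}{\ell_j(x)^{2}}=\sum_{j\neq k}\frac{\langle v_j,v_k\rangle}{\ell_j(x)\,\ell_k(x)} .
$$
No two of the $v_j$ are parallel, so each pair $v_j,v_k$ with $j\neq k$ is linearly independent. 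Hence every term above is a reciprocal product over a linearly independent subset, and $\Delta P/P\in AO(\cA)$.

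Next I evaluate $\Delta P/P$ at each $u\in\mathscr{E}(P)$. Pairing the defining equation $u=\frac1n\sum_j v_j/\ell_j(u)$ with $u$ gives $\lvert u\rvert^2=\frac1n\sum_j 1=1$, so $u\in\Sp^{d-1}$. Moreover $P(u)\neq 0$, and $\sum_j v_j/\ell_j(u)=nu$. Therefore
$$
\frac{\Delta P}{P}(u)=n^2-\sum_{j=1}^n\frac{1}{\langle v_j,u\rangle^{2}}.
$$

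The key step uses extremality together with the identity \eqref{eq:local-global}. Because the configuration is extremal and each $u\in\mathscr{E}(P)$ is a unit vector off $\{P=0\}$, every summand
$$
\Bigl(\sum_j\langle v_j,u\rangle^{-2}-n^2\Bigr)\mu(u)
$$
in \eqref{eq:local-global} is nonnegative. The weights $\mu(u)$ are strictly positive, and the summands add up to zero. Hence each bracket vanishes, that is, $\sum_j\langle v_j,u\rangle^{-2}=n^2$ for every $u\in\mathscr{E}(P)$. Consequently $(\Delta P/P)(u)=0$ at every point of $\mathscr{E}(P)$.

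Finally, Theorem~\ref{thm:basis} applied to $f=\Delta P/P$ gives $\Delta P/P=\sum_u f(u)L_u\equiv 0$ as a rational function. Multiplying by $P$ shows that the polynomial $\Delta P$ vanishes off the hyperplanes, hence identically, so $P$ is harmonic.

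The only delicate point is checking that $\Delta P/P$ genuinely lies in $AO(\cA)$. This is exactly where the non-parallel hypothesis (ensured by Lemma 7.6 of~\cite{MOM}) enters: the diagonal terms $1/\ell_j^2$ cancel thanks to $\lvert v_j\rvert=1$, and the remaining pairs are independent. Once that is verified, the argument is a direct application of the interpolation formula.
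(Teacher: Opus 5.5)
Your proof is correct and follows essentially the same route as the paper. You use \eqref{eq:local-global} together with extremality to force $\sum_j\langle v_j,u\rangle^{-2}=n^2$ on $\mathscr{E}(P)$, and the logarithmic-derivative formula then makes $\Delta P/P$ vanish at every point of $\mathscr{E}(P)$; the non-parallel hypothesis puts $\Delta P/P$ in $AO(\cA)$, and the interpolation formula finishes the argument (your direct check that $\lVert u\rVert=1$ from the defining equation is a small detail the paper only asserts).
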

	
	\begin{proof}
		Write
		$$
		Q(x)=\sum_{j=1}^{n}\frac{1}{\langle v_j,x\rangle^{2}},
		\qquad
		x\in\Sp^{d-1}\setminus\{P=0\}.
		$$
		Extremality means that $Q(x)\ge n^{2}$ for every
		$x\in\Sp^{d-1}\setminus\{P=0\}$; in particular, $Q(u)\ge n^{2}$ for each
		$u\in\mathscr{E}(P)$.
		
		By the local--global identity \eqref{eq:local-global},
		$$
		\sum_{u\in\mathscr{E}(P)}\bigl(Q(u)-n^{2}\bigr)\mu(u)=0 .
		$$
		Every summand is nonnegative and every weight $\mu(u)$ is positive, so
		$$
		Q(u)=n^{2}\qquad\text{for all }u\in\mathscr{E}(P).
		$$
	The Laplacian $\Delta P$ can be computed by logarithmic differentiation (cf. Lemma 7.5 in \cite{MOM}). For any $x\notin\{P=0\}$,
		$$
		\frac{\Delta P(x)}{P(x)}
		=
		\norm{\sum_{j=1}^{n}\frac{v_j}{\langle v_j,x\rangle}}^{2}-Q(x).
		$$
		Fix an extremizer $u\in\mathscr{E}(P)$, by definition it satisfies $$\sum_{j=1}^{n}\frac{v_j}{\langle v_j,u\rangle}=nu.$$ Since
		$\norm{u}=1$, the norm term equals $\norm{nu}^{2}=n^{2}$, and therefore
		$$
		\frac{\Delta P(u)}{P(u)}=n^{2}-Q(u)=0 
		$$
		at any extremal point $u\in \mathscr{E}(P)$.
        
		On the other hand, expanding the squared norm and subtracting $Q(x)$ gives the equivalent expression
		$$
		\frac{\Delta P(x)}{P(x)}
		=
		2\!\!\sum_{1\le j<k\le n}
		\frac{\langle v_j,v_k\rangle}{\langle v_j,x\rangle\langle v_k,x\rangle}.
		$$
		Because no two of the $v_j$ are parallel, each pair $(v_j,v_k)$ is linearly
		independent. As a consequence the quotient $\Delta P/P$ is a linear combination of reciprocal products over
		independent pairs. In other words,  $\Delta P/P\in AO(\cA)$. Applying the interpolation
		formula of Theorem~\ref{thm:basis} and using $\Delta P(u)/P(u)=0$ at every
		extremal point $u$ yields
		$$
		\frac{\Delta P(x)}{P(x)}
		=
		\sum_{u\in\mathscr{E}(P)}\frac{\Delta P(u)}{P(u)}\,L_u(x)
		=
		0 .
		$$
		Thus $\Delta P$ vanishes on the complement of the hyperplanes; being a
		polynomial, it vanishes identically. Therefore $P$ is harmonic.
	\end{proof}
	
\section{The generalized setting}\label{sec:setting}

In this section we will introduce several dummy variables that will be useful later in the proof. For the most part they represent straightforward generalizations of our previous observations. Fix positive parameters $t_1,\ldots,t_n$ and unit vectors
$v_1,\ldots,v_n\in\R^d$ and  define the map
\begin{equation}\label{eq:T-map}
 T(x)
 =x-
 \sum_{j=1}^n
 \frac{t_jv_j}{\ip{v_j}{x}}.
\end{equation}
For $y\in\R^d$, the corresponding shifted extremal set is
\begin{equation}\label{eq:shifted-extremals}
 \mathscr{E}_{y}(P;\mathbf t)
 =\left\{
 u\in\mathbb{R}^d:
 u
 =y+
 \sum_{j=1}^n
 \frac{t_jv_j}{\ip{v_j}{u}} \textrm{ such that $P(u)\neq 0$}
 \right\}.
\end{equation}
Thus $\mathscr{E}_{y}(P;\mathbf t)=T^{-1}(y)$. In the equal-parameter normalization $t_1=\cdots=t_n=1/n$ with $y=0$ we recover the extremal points previously defined. The following result might also be traced back to the work of Aomoto and Forrester (see also  \cite[Section~3, equation~(20)]{CaseiroFrancoiseSasaki}).

\begin{lemma}\label{lem:branches}
Fix a chamber $C$. Then, for every $y\in\R^d$ the equation
$T(u)=y$ has a unique solution $u_C(y)$ in $C$.
Moreover, the restriction map $T|_C:C\to\R^d$ is a smooth diffeomorphism and
\begin{equation}\label{eq:DT}
 DT(u)
 =I+
 \sum_{j=1}^n
 \frac{t_j\,v_j\otimes v_j}
      {\ip{v_j}{u}^{2}}.
\end{equation}
\end{lemma}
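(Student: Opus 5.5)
Within a fixed chamber $C$ the map $T$ is a gradient: on $C$ every linear form $\ip{v_j}{x}$ has a constant sign $\epsilon_j\in\{\pm1\}$, so
$$
\Phi(x)=\frac12\norm{x}^2-\sum_{j=1}^n t_j\log\bigl(\epsilon_j\ip{v_j}{x}\bigr)
$$
is smooth on $C$ and satisfies $\nabla\Phi=T$. Solving $T(u)=y$ in $C$ is therefore the same as finding a critical point in $C$ of
$$
\Phi_y(x)=\Phi(x)-\ip{y}{x}.
$$

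\textbf{Step 1: the Jacobian.} Differentiating $-t_jv_j/\ip{v_j}{x}$ gives $t_j\,v_j\otimes v_j/\ip{v_j}{x}^2$, which yields \eqref{eq:DT}. This matrix is the Hessian of $\Phi_y$. It is the identity plus a positive semidefinite operator, hence positive definite with all eigenvalues at least $1$. So $\Phi_y$ is strictly, indeed $1$-strongly, convex on the convex open set $C$ (a chamber is an open polyhedral cone).

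\textbf{Step 2: uniqueness.} By strict convexity, $\Phi_y$ has at most one critical point in $C$. Equivalently, $\ip{T(a)-T(b)}{a-b}\ge\norm{a-b}^2$ for $a,b\in C$, so $T|_C$ is injective.

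\textbf{Step 3: existence (the main obstacle).} I will show that $\Phi_y$ is coercive on $C$, i.e.\ $\Phi_y(x)\to+\infty$ both as $x\to\partial C$ and as $\norm{x}\to\infty$ within $C$.
\begin{itemize}[leftmargin=*]
\item Near a boundary point with $\norm{x}$ bounded, some $\epsilon_j\ip{v_j}{x}\to0^+$. The corresponding term $-t_j\log$ tends to $+\infty$, while the remaining log terms are bounded above by $t_k\log\norm{x}$-type quantities, which stay bounded here since $|\ip{v_k}{x}|\le\norm{x}$.
\item As $\norm{x}\to\infty$, each $-t_j\log(\epsilon_j\ip{v_j}{x})\ge -t_j\log\norm{x}$. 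Hence $\Phi_y(x)\ge\frac12\norm{x}^2-\norm{y}\norm{x}-(\sum_j t_j)\log\norm{x}\to\infty$.
\end{itemize}
The only care needed is that every term goes to $+\infty$ or stays bounded below; no term can tend to $-\infty$ while $x$ stays in $C$. Consequently the sublevel sets of $\Phi_y$ are closed and bounded subsets of $C$, so $\Phi_y$ attains its minimum at an interior point. That point $u_C(y)$ satisfies $T(u_C(y))=y$.

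\textbf{Step 4: conclusion.} By Steps 2 and 3, $T|_C:C\to\R^d$ is a smooth bijection. Its differential \eqref{eq:DT} is everywhere invertible, so the inverse function theorem shows the inverse is smooth. Hence $T|_C$ is a diffeomorphism, and $y\mapsto u_C(y)$ is a smooth branch.

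Note that linear dependence among the $v_j$, or unboundedness of $C$ in some directions, does not affect the argument, since the quadratic term alone ensures growth at infinity.
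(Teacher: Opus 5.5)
Your proof is correct and is essentially the argument the paper defers to (it gives no proof of its own, pointing to \cite{MOM} and its parallel in \cite{OUI}). There, $T$ is the gradient of the strictly convex barrier $\tfrac12\|x\|^2-\sum_j t_j\log\lvert\langle v_j,x\rangle\rvert$ on each chamber: uniqueness comes from strict convexity, existence from coercivity at $\partial C$ and at infinity, and smoothness of the inverse from the inverse function theorem applied to the positive definite Jacobian \eqref{eq:DT}.
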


The reader might find their own proof following closely the one we gave in \cite{MOM} which is paralleled in \cite{OUI}. The previous proposition suggests we adapt our the definition of $\mu$ to the following
\begin{equation}\label{eq:mu-general}
 \mu(u)
 =
 \det\left(
 I+
 \sum_{j=1}^n
 \frac{t_j\,v_j\otimes v_j}
      {\ip{v_j}{u}^{2}}
 \right)^{-1}.
\end{equation}

Similarly, define
\begin{equation}\label{eq:L-general}
 L_{u}(x)
 =\mu(u)
 \det\left(
 I+
 \sum_{j=1}^n
 \frac{t_j\,v_j\otimes v_j}
 {\ip{v_j}{u}\ip{v_j}{x}}
 \right).
\end{equation}
whenever the denominators do not vanish.  Again, the functions $L_{u}$ consequently form have the interpolation property as a consequence of the following

\begin{lemma}[Lagrange property]\label{lem:lagrange2}
	For all $u,x\in\mathscr{E}_{y}(P;\mathbf t)$,
	\[
		L_{u}(x)
		=
		\delta_{u,x}.
	\]
\end{lemma}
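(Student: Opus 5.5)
The plan is to repeat the argument of Proposition~\ref{prop:lagrange} almost verbatim, with the weights $1/n$ replaced by $t_j$ and the shifted defining equation in place of the unshifted one. There are two cases.

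\emph{Case $u=x$.} When $x=u$, the denominators $\ip{v_j}{u}\ip{v_j}{x}$ in \eqref{eq:L-general} become $\ip{v_j}{u}^2$. The determinant in \eqref{eq:L-general} is then exactly $\mu(u)^{-1}$ by \eqref{eq:mu-general}, so $L_u(u)=1$. Here I note that $\mu(u)$ is well defined. For $t_j>0$ the operator $I+\sum_j t_j\,v_j\otimes v_j/\ip{v_j}{u}^2$ is positive definite, hence invertible; this is also the Jacobian $DT(u)$ of Lemma~\ref{lem:branches}.

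\emph{Case $u\neq x$.} Both points lie in $\mathscr{E}_y(P;\mathbf t)$, so both satisfy $T(\cdot)=y$ with the same $y$. Subtracting the two defining equations cancels $y$ and leaves
\[
u-x=\sum_{j=1}^n t_j\left(\frac{1}{\ip{v_j}{u}}-\frac{1}{\ip{v_j}{x}}\right)v_j .
\]
Using the identity $\frac{1}{a}-\frac{1}{b}=-\frac{b-a}{ab}$ with $b-a=\ip{v_j}{x-u}=-\ip{v_j}{u-x}$, each coefficient becomes $-t_j\ip{v_j}{u-x}/(\ip{v_j}{u}\ip{v_j}{x})$. The displayed relation is therefore equivalent to
\[
\left(I+\sum_{j=1}^n\frac{t_j\,v_j\otimes v_j}{\ip{v_j}{u}\ip{v_j}{x}}\right)(u-x)=0 .
\]
All denominators are nonzero because $P(u)\neq0$ and $P(x)\neq0$. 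Since $u-x\neq0$, this operator has a nontrivial kernel, so its determinant vanishes and $L_u(x)=0$.

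I expect no real obstacle here. The only points needing care are the sign bookkeeping in the subtraction and the remark that the shift $y$ cancels because both points lie in the same fibre $T^{-1}(y)$. Positivity of the $t_j$ is used only to guarantee $\mu(u)$ is finite and positive, not for the vanishing.
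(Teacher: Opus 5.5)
Your proof is correct and is exactly the adaptation of Proposition~\ref{prop:lagrange} that the paper leaves to the reader. The case $u=x$ gives $L_u(u)=\mu(u)\mu(u)^{-1}=1$, and for $u\neq x$ subtracting the fibre equations $T(u)=T(x)=y$ puts $u-x\neq 0$ in the kernel of the operator defining $L_u(x)$. One small slip: the identity should read $\frac{1}{a}-\frac{1}{b}=\frac{b-a}{ab}$ (not $-\frac{b-a}{ab}$); with $b-a=-\ip{v_j}{u-x}$ this gives exactly the coefficient $-t_j\ip{v_j}{u-x}/(\ip{v_j}{u}\ip{v_j}{x})$ you state, so your displayed kernel equation is right.
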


A proof following the same ideas of our previous is left to the reader. We now state the chamber identities in the form needed below. 

\begin{lemma}[Generalized Chamber identities]\label{thm:chamber-identities2}
Let $v_1,\ldots,v_n$ be unit vectors span $\R^d$ with no collinear pair. For every $y\in\R^d$, the
weights $\mu(u)$, $u\in\mathscr{E}_{y}(P;\mathbf t)$, satisfy
\begin{align}
 \sum_{u\in\mathscr{E}_{y}(P;\mathbf t)}\mu(u)
 &=1,\label{eq:chamber-0}\\
 \sum_{u\in\mathscr{E}_{y}(P;\mathbf t)}
 \frac{\mu(u)}{\ip{v_i}{u}}
 &=0,\qquad 1\leq i\leq n,\label{eq:chamber-1}\\
 \sum_{u\in\mathscr{E}_{y}(P;\mathbf t)}
 \frac{\mu(u)}
 {\ip{v_i}{u}\ip{v_j}{u}}
 &=\frac{\delta_{ij}}{\sqrt{t_it_j}},
 \qquad 1\leq i,j\leq n.
 \label{eq:chamber-2}
\end{align}

\end{lemma}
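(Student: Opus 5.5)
The plan is to repeat the argument of Theorems~\ref{thm:basis} and~\ref{thm:chamber-identities} almost verbatim, now with the weights $t_j$ and the shifted set $\mathscr{E}_{y}(P;\mathbf t)$. The shift $y$ turns out to play no role in the algebra.

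First I would establish the interpolation formula in this setting.

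\emph{Counting.} By Lemma~\ref{lem:branches}, each chamber $C$ contains exactly one point $u_C(y)$ of $T^{-1}(y)$. Every solution of $T(u)=y$ with $P(u)\neq0$ lies in some chamber. Hence $\lvert\mathscr{E}_{y}(P;\mathbf t)\rvert$ equals the number of chambers, which is $\dim AO(\cA)$ by Theorem~\ref{thm:orlikterao}.

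\emph{Membership.} The determinant expansion used in Lemma~\ref{lem:membership}, applied with coefficients $t_j/(\ip{v_j}{u}\ip{v_j}{x})$, shows that $L_u\in AO(\cA)$ as defined in \eqref{eq:L-general}. Gram determinants of dependent subsets vanish, so only products over linearly independent subsets survive.

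\emph{Lagrange property.} This is Lemma~\ref{lem:lagrange2}. To verify it, subtract the defining equations of two points $u,w\in\mathscr{E}_{y}(P;\mathbf t)$. The $y$'s cancel and we get
$$\Bigl(I+\sum_{j=1}^n \frac{t_j\,v_j\otimes v_j}{\ip{v_j}{u}\ip{v_j}{w}}\Bigr)(u-w)=0.$$
So $L_u(w)=0$ for $u\ne w$, while $L_u(u)=1$ by the choice \eqref{eq:mu-general} of $\mu(u)$.

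\emph{Conclusion.} As in Theorem~\ref{thm:basis}, the $L_u$ form a basis of $AO(\cA)$, and every $f\in AO(\cA)$ satisfies $f=\sum_u f(u)L_u$.

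Next, I would expand $L_u$ by homogeneous degree in $x$:
$$L_u(x)=\mu(u)\Bigl(1+\sum_{j=1}^n\frac{t_j}{\ip{v_j}{u}\ip{v_j}{x}}+R_u(x)\Bigr),$$
where $R_u$ has reciprocal degree at least two. The two auxiliary facts from the proof of Theorem~\ref{thm:chamber-identities} then apply unchanged. Homogeneous components of an identity in $AO(\cA)$ can be separated by the scaling $x\mapsto sx$. The functions $1/\ip{v_j}{x}$ are linearly independent because no two $v_j$ are parallel.

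\emph{Identity \eqref{eq:chamber-0}.} Interpolate $f=1$ and compare degree-zero parts.

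\emph{Identities \eqref{eq:chamber-1} and \eqref{eq:chamber-2}.} Interpolate $f=1/\ip{v_i}{x}$. Its degree-zero part gives \eqref{eq:chamber-1}. Its degree-one part gives
$$\frac1{\ip{v_i}{x}}=\sum_{j=1}^n t_j\Bigl(\sum_{u}\frac{\mu(u)}{\ip{v_i}{u}\ip{v_j}{u}}\Bigr)\frac1{\ip{v_j}{x}}.$$
Linear independence then yields $\sum_u \mu(u)/(\ip{v_i}{u}\ip{v_j}{u})=\delta_{ij}/t_j$. This coincides with $\delta_{ij}/\sqrt{t_it_j}$: both sides vanish for $i\neq j$, and both equal $1/t_i$ for $i=j$.

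The only step needing real care is the counting. I must make sure that $\mathscr{E}_{y}(P;\mathbf t)$ meets each chamber exactly once and contains no other points. This rests entirely on Lemma~\ref{lem:branches}, together with the requirement $P(u)\neq0$ built into \eqref{eq:shifted-extremals}. Once that is in place, linear independence of the $L_u$ plus Theorem~\ref{thm:orlikterao} makes them a basis, and everything else is bookkeeping. The spanning hypothesis is not needed in this argument.
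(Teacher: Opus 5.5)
Your proposal is correct and matches the paper's route. The paper omits this proof as a ``straightforward generalization'' of Theorems~\ref{thm:basis} and~\ref{thm:chamber-identities}, and you carry that generalization out: you rebuild the basis $\{L_u\}$ for $\mathscr{E}_{y}(P;\mathbf t)$ using Lemma~\ref{lem:branches}, the cancellation of $y$ in the Lagrange property, and Theorem~\ref{thm:orlikterao}, then compare homogeneous components after interpolating $1$ and $1/\ip{v_i}{x}$; the value $\delta_{ij}/t_j$ you obtain indeed coincides with $\delta_{ij}/\sqrt{t_it_j}$.
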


In particular, if $t_1=\cdots=t_n=1/n$, we recover the chamber identities in Theorem \ref{thm:chamber-identities}. Their proof is omitted as they follow by straightforward generalization of our previous arugments.

\subsection{The change of coordinates}

In this section we follow \cite{OUI}. First we introduce an identity that will be used to identify a probability measure that plays a relevant r\^ole in the proof. 

\begin{lemma}[Quadratic identity]\label{lem:quadratic}
For every $x$ such that $P(x)\neq 0$, the following identity
\begin{equation}\label{eq:quadratic}
 \|x\|^2+
 \left\|
 \sum_{j=1}^n
 \frac{t_jv_j}{\ip{v_j}{x}}
 \right\|^2
 =\|T(x)\|^2+2\sum_{j=1}^n t_j.
\end{equation}
holds.
\end{lemma}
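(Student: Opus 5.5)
This is a direct computation: expand $\|T(x)\|^2$ using the definition \eqref{eq:T-map} and compare with the left-hand side. Write $w(x)=\sum_{j=1}^n t_jv_j/\ip{v_j}{x}$, so that $T(x)=x-w(x)$. Expanding the square gives
\[
 \|T(x)\|^2=\|x\|^2-2\ip{x}{w(x)}+\|w(x)\|^2 .
\]
The whole identity therefore reduces to evaluating the cross term $\ip{x}{w(x)}$.

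By linearity of the inner product,
\[
 \ip{x}{w(x)}=\sum_{j=1}^n t_j\,\frac{\ip{v_j}{x}}{\ip{v_j}{x}}=\sum_{j=1}^n t_j .
\]
This is valid precisely because $P(x)\neq0$, which guarantees that every denominator $\ip{v_j}{x}$ is nonzero. Substituting back yields $\|T(x)\|^2=\|x\|^2+\|w(x)\|^2-2\sum_j t_j$. Rearranging gives \eqref{eq:quadratic}.

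There is no real obstacle. The only point to record is that the hypothesis $P(x)\neq0$ is exactly what is needed for each ratio $\ip{v_j}{x}/\ip{v_j}{x}$ to equal $1$. The argument does not use that the $v_j$ are unit vectors, that they are non-parallel, or that the $t_j$ are positive; these hypotheses matter only later, when Lemma~\ref{lem:branches} and the chamber identities are combined with \eqref{eq:quadratic} to produce a Gaussian density under the change of variables $y=T(u)$.
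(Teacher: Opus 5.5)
Your proof is correct and is essentially the paper's argument: both expand $\|T(x)\|^2=\|x-w(x)\|^2$ and use the cross-term identity $\langle x,w(x)\rangle=\sum_{j=1}^n t_j$, which holds because $P(x)\neq 0$. Your remark that unit length, non-parallelism and positivity of the $t_j$ are not needed for this identity is also accurate.
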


\begin{proof}
it follows after expanding the square in the definition of $T$ above (equation \eqref{eq:T-map}) and taking into account the identity
\[
 \left\langle
 x,
 \sum_{j=1}^n
 \frac{t_jv_j}{\ip{v_j}{x}}
 \right\rangle
 =\sum_{j=1}^n t_j.
\]
\end{proof}

Let $d\gamma_d$ denote standard Gaussian measure on $\R^d$, namely
\[
 d\gamma_d(x)
 =(2\pi)^{-d/2}e^{-\|x\|^2/2}\,\dd x.
\]
The integrand on equation \eqref{eq:chamberwise-change} in the following result captures the importance of chamber identities in connection to integration against Gaussian measures. 
	
	\begin{lemma}[Chamberwise change of variables]\label{lem:cov}
		Under the assumptions of Lemma~\ref{thm:chamber-identities2}, let $\dd\nu$ be the Borel
		measure on $\mathbb R^d$ defined by
		\begin{equation}\label{eq:nu}
			d\nu(x)
			=\exp\left(
			\sum_{j=1}^n t_j-
			\frac12\left\|
			\sum_{j=1}^n
			\frac{t_jv_j}{\ip{v_j}{x}}
			\right\|^2
			\right)d\gamma_d(x).
		\end{equation}
		Then for every $F\in L^1(\dd\nu)$,
		\begin{equation}\label{eq:chamberwise-change}
			\int_{\mathbb R^d}F(x)\,d\nu(x)
			=\int_{\mathbb R^d}\!\!\sum_{u\in\mathscr E_y(P;\mathbf t)}\!\!\mu(u)\,F(u)\,d\gamma_d(y).		\end{equation}
	\end{lemma}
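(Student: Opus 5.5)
The identity \eqref{eq:chamberwise-change} is a change of variables carried out one chamber at a time. The complement $\mathbb R^d\setminus\{P=0\}$ is the disjoint union of the chambers $C$, and the hyperplanes have Lebesgue measure zero. Hence
\[
\int_{\mathbb R^d}F\,d\nu=\sum_{C}\int_C F(x)\,d\nu(x).
\]
By Lemma~\ref{lem:branches}, each restriction $T|_C:C\to\mathbb R^d$ is a smooth diffeomorphism. Its inverse is the branch $y\mapsto u_C(y)$, and the Jacobian is \eqref{eq:DT}. That Jacobian is positive definite because $t_j>0$, so its determinant equals $\mu(u)^{-1}$ by \eqref{eq:mu-general}. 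On each chamber I will substitute $x=u_C(y)$, which gives $dx=\mu(u_C(y))\,dy$. In other words, for a function $G$ on $C$,
\[
\int_C G(x)\,dx=\int_{\mathbb R^d}G(u_C(y))\,\mu(u_C(y))\,dy.
\]

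Next I rewrite the density of $\nu$ in the new variable. By \eqref{eq:nu} and the definition of $\gamma_d$, the density of $\nu$ with respect to Lebesgue measure is
\[
(2\pi)^{-d/2}\exp\Bigl(\sum_j t_j-\tfrac12\|x\|^2-\tfrac12\bigl\|\textstyle\sum_j t_jv_j/\langle v_j,x\rangle\bigr\|^2\Bigr).
\]
The quadratic identity, Lemma~\ref{lem:quadratic}, turns the exponent into $-\tfrac12\|T(x)\|^2$: the term $-\sum_j t_j$ coming from the identity cancels the prefactor $e^{\sum_j t_j}$. At $x=u_C(y)$ we have $T(x)=y$, so the density becomes exactly the Gaussian density $(2\pi)^{-d/2}e^{-\|y\|^2/2}$. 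This is the reason for the choice of normalisation in \eqref{eq:nu}. Therefore
\[
\int_C F\,d\nu=\int_{\mathbb R^d}\mu(u_C(y))\,F(u_C(y))\,d\gamma_d(y).
\]

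Finally I sum over the finitely many chambers. By Lemma~\ref{lem:branches}, the set $\mathscr E_y(P;\mathbf t)=T^{-1}(y)$ consists of exactly one point $u_C(y)$ in each chamber, so the sum over chambers is precisely the sum over $u\in\mathscr E_y(P;\mathbf t)$.

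The main point needing care is integrability and the interchange of sum and integral. I will first prove the identity for $F\ge0$, where the change of variables and the finite sum need no integrability assumptions (Tonelli). I will then pass to $F\in L^1(d\nu)$ by splitting $F=F^+-F^-$. Both sides are finite by the nonnegative case applied to $|F|$, and each weight $\mu(u)$ is positive. The only further input is that the branches $u_C$ are measurable, which holds because they are smooth by Lemma~\ref{lem:branches}.
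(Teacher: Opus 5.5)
Your proposal is correct and takes the same approach as the paper. It discards the hyperplanes, uses Lemma~\ref{lem:quadratic} to rewrite the density of $\nu$ as $(2\pi)^{-d/2}e^{-\|T(x)\|^2/2}$, substitutes $x=u_C(y)$ on each chamber with $dx=\mu(u_C(y))\,dy$ via Lemma~\ref{lem:branches}, and sums over chambers. Your extra step of proving the case $F\ge 0$ first and then passing to $F\in L^1(d\nu)$ through $F=F^+-F^-$ spells out an integrability point that the paper leaves implicit.
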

	
	\begin{proof}
		The union of the hyperplanes $\{P=0\}$ has Gaussian measure zero and may be discarded
		from every integral. Hence,
		decomposing $\mathbb R^d\setminus\{P=0\}$ into its chambers $C$ yields
		\[
		\int_{\mathbb R^d}F\,d\nu
		=(2\pi)^{-d/2}\int_{\mathbb R^d}F(x)\,e^{-\|T(x)\|^2/2}\,\dd x
		=(2\pi)^{-d/2}\sum_C\int_C F(x)\,e^{-\|T(x)\|^2/2}\,\dd x .
		\]
		By Lemma~\ref{lem:branches}, for each chamber $C$ the restriction
		$T|_C\colon C\to\mathbb R^d$ is a diffeomorphism onto $\mathbb R^d$ with inverse $u_C$,
		and the substitution $y=T(x)$, $x=u_C(y)$, has Jacobian $\dd x=\mu(u_C(y))\,\dd y$.
		Therefore
		\[
		\int_C F(x)\,e^{-\|T(x)\|^2/2}\,\dd x
		=\int_{\mathbb R^d}F\bigl(u_C(y)\bigr)\,\mu\bigl(u_C(y)\bigr)\,e^{-\|y\|^2/2}\,\dd y .
		\]
	 Adding over all the chambers one gets
		\[
		\int_{\mathbb R^d}F\,d\nu
		=(2\pi)^{-d/2}\int_{\mathbb R^d}\!\!\sum_{u\in\mathscr E_y(P;\mathbf t)}\!\!
		\mu(u)\,F(u)\,e^{-\|y\|^2/2}\,\dd y
		=\int_{\mathbb R^d}\!\!\sum_{u\in\mathscr E_y(P;\mathbf t)}\!\!\mu(u)\,F(u)\,d\gamma_d(y),
		\]
		which is \eqref{eq:chamberwise-change}. 
	\end{proof}
	
	\begin{lemma}\label{lem:tilt}
		Under the assumptions of Lemma~\ref{thm:chamber-identities2}, the measure $\nu$ defined
		in \eqref{eq:nu} is a probability measure. Furthermore,
		\begin{align}
			\E_\nu\left(\frac1{\ip{v_i}{\mathbf Z}}\right)&=0,
			\label{eq:nu-first}\\
			\E_\nu\left(\frac1
			{\ip{v_i}{\mathbf Z}\ip{v_j}{\mathbf Z}}\right)
			&=\frac{\delta_{ij}}{\sqrt{t_it_j}},
			\label{eq:nu-second}
		\end{align}
		the expectations being computed with respect to $d\nu(\mathbf Z)$.
	\end{lemma}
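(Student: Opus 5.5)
The plan is to apply the chamberwise change of variables, Lemma~\ref{lem:cov}, to three specific test functions and then use the generalized chamber identities, Lemma~\ref{thm:chamber-identities2}, pointwise in $y$.

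\emph{Total mass.} Take $F\equiv 1$. Since $\nu$ has a nonnegative density with respect to $\gamma_d$, monotone convergence lets us use \eqref{eq:chamberwise-change} for nonnegative measurable $F$ without knowing integrability in advance. This gives
$$\nu(\R^d)=\int_{\R^d}\sum_{u\in\mathscr E_y(P;\mathbf t)}\mu(u)\,d\gamma_d(y).$$
By \eqref{eq:chamber-0} the inner sum equals $1$ for every $y$. Hence $\nu(\R^d)=\gamma_d(\R^d)=1$, so $\nu$ is a probability measure.

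\emph{The moments.} Next take $F(x)=1/\ip{v_i}{x}$ and $F(x)=1/(\ip{v_i}{x}\ip{v_j}{x})$. Formally, \eqref{eq:chamberwise-change} turns $\E_\nu F$ into the $\gamma_d$-average in $y$ of $\sum_u\mu(u)F(u)$. By \eqref{eq:chamber-1} and \eqref{eq:chamber-2}, this sum is the constant $0$, respectively $\delta_{ij}/\sqrt{t_it_j}$, independently of $y$. Integrating against the probability measure $\gamma_d$ then gives \eqref{eq:nu-first} and \eqref{eq:nu-second}.

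\emph{Integrability: the main obstacle.} To invoke Lemma~\ref{lem:cov} we need $F\in L^1(d\nu)$. I would first apply the nonnegative version of the change of variables to $|F|$. This shows
$$\int|F|\,d\nu=\int\sum_u\mu(u)|F(u)|\,d\gamma_d(y),$$
so it suffices to bound $\sum_u\mu(u)\,|\ip{v_i}{u}|^{-k}$ for $k=1,2$, uniformly or at least $\gamma_d$-integrably in $y$.

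For $k=2$ with $i=j$, the summands are already nonnegative. Identity \eqref{eq:chamber-2} gives exactly $1/t_i$, so $1/\ip{v_i}{x}^2\in L^1(\nu)$ with no further work.

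The remaining cases follow from Cauchy--Schwarz, $|ab|\le\tfrac12(a^2+b^2)$, together with the bound $|s|^{-1}\le 1+s^{-2}$:
$$\frac{1}{|\ip{v_i}{x}\ip{v_j}{x}|}\le\frac12\left(\frac{1}{\ip{v_i}{x}^2}+\frac{1}{\ip{v_j}{x}^2}\right),\qquad \frac{1}{|\ip{v_i}{x}|}\le 1+\frac{1}{\ip{v_i}{x}^2}.$$
Both right-hand sides are $\nu$-integrable, since $\nu$ is finite and the squared reciprocals are integrable. This establishes $F\in L^1(\nu)$ in each case, so Lemma~\ref{lem:cov} applies and the signed identities above are justified.

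The only delicate point is the justification of the nonnegative change of variables. It follows from the proof of Lemma~\ref{lem:cov}, which uses only the diffeomorphism property in Lemma~\ref{lem:branches} and Tonelli's theorem over the finitely many chambers.
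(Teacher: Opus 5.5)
Your proposal is correct and follows the paper's proof. You apply the nonnegative form of \eqref{eq:chamberwise-change} with $F\equiv1$ and $F=\ip{v_i}{x}^{-2}$, combined pointwise in $y$ with \eqref{eq:chamber-0} and the diagonal case of \eqref{eq:chamber-2}, to get total mass $1$ and $\ip{v_i}{\cdot}^{-1}\in L^2(\nu)$; the $L^1$ case together with \eqref{eq:chamber-1}--\eqref{eq:chamber-2} then yields the moments. The only cosmetic difference is that you obtain integrability from the pointwise bounds $|ab|^{-1}\le\frac12(a^{-2}+b^{-2})$ and $|s|^{-1}\le 1+s^{-2}$, whereas the paper uses $L^2(\nu)\subseteq L^1(\nu)$ and Cauchy--Schwarz.
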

	
	\begin{proof}
		Apply Lemma~\ref{lem:cov} to $F\equiv1$. Since
		$\sum_{u\in\mathscr E_y(P;\mathbf t)}\mu(u)=1$ by \eqref{eq:chamber-0} and
		$\gamma_d$ is a probability measure,
		\begin{equation}\label{eq:normalization-change}
			\E_\nu(1)
			=\int_{\mathbb R^d}\!\!\sum_{u\in\mathscr E_y(P;\mathbf t)}\!\!\mu(u)\,d\gamma_d(y)
			=\int_{\mathbb R^d}1\,d\gamma_d(y)=1,
		\end{equation}
		so $\nu$ is a probability measure.
		
		Fix $i$ and apply the nonnegative case of \eqref{eq:chamberwise-change} we apply the same argument to
		$F(x)=\ip{v_i}{x}^{-2}$. By the diagonal case of \eqref{eq:chamber-2},
		\[
		\E_\nu\!\left(\frac{1}{\ip{v_i}{\mathbf Z}^2}\right)
		=\int_{\mathbb R^d}\!\!\sum_{u\in\mathscr E_y(P;\mathbf t)}\!\!
		\frac{\mu(u)}{\ip{v_i}{u}^2}\,d\gamma_d(y)
		=\frac1{t_i}<\infty .
		\]
		Thus $\ip{v_i}{\mathbf Z}^{-1}\in L^2(\nu)$, and since $\nu$ is a probability measure
		$L^2(\nu)\subseteq L^1(\nu)$; by Cauchy--Schwarz both $\ip{v_i}{\cdot}^{-1}$ and
		$\ip{v_i}{\cdot}^{-1}\ip{v_j}{\cdot}^{-1}$ lie in $L^1(\nu)$. Applying the $L^1$ case of
		\eqref{eq:chamberwise-change} to $F(x)=\ip{v_i}{x}^{-1}$ with \eqref{eq:chamber-1} yields
		\eqref{eq:nu-first}, and to $F(x)=\ip{v_i}{x}^{-1}\ip{v_j}{x}^{-1}$ with
		\eqref{eq:chamber-2} yields \eqref{eq:nu-second}.
	\end{proof}

\section{Proof of Theorem~\ref{prop:main}}\label{sec:proofD}

   Let us introduce momentarily a class of functions that correspond to the completely monotone ones in Theorem \ref{prop:main} above. Given $d\rho$ a ($\sigma$-finite Borel) measure on 
$[0,\infty)$ such that
\begin{equation}  \label{eq:finiteness}\int_0^{\infty}\exp(-t)d\rho(t)
\end{equation}
is finite. We define its transform
\begin{equation}\label{eq:fj-representation}
 g(r)
 =\int_0^{\infty}
 \exp\!\left(-\frac{t^2}{2r^2}\right)d\rho(t).
\end{equation}
We denote the class of functions that arise as such transforms by $\mathcal{G}$. This transform is closely related to the Laplace transform by a change of variables. In particular, by the Bernstein-Hausdorff-Widder theorem it is closely related to completely monotonic functions and might be written as Laplace transforms of measures of the form $\tilde{\rho}(t)\frac{dt}{\sqrt{t}}$ (cf. Widder \cite{Widder1946}).  This class identifies a useful form to which the functions $f_j$ in Theorem~\ref{prop:main} above translates. The finiteness restriction on the integral \eqref{eq:finiteness} might be avoided by a limiting process in the $f_j$. Indeed, each $f_j$ is nonzero and completely monotone, Bernstein-Hausdorff-Widder's theorem implies that $f_j(s)>0$ for every $s>0$. Hence
$$
\mathbb E_Z f_j\left(\frac{1}{Z^2}\right)\in(0,\infty].
$$
The inequality is therefore meaningful in the extended positive real numbers. It follows by applying the finite-valued inequality to bounded monotone approximations of the functions $f_j$ and then using the monotone convergence theorem.

Let us state it first

\begin{proposition}\label{prop:5.1}
Let $v_1,\ldots,v_n\in\mathbb{S}^{d-1}$ no two of which are collinear,  $\mathbf Z$ be the standard Gaussian vector and $g_j\in\mathcal{G}$  for any
 $j=1,\ldots,n$.
Then
\begin{equation}\label{eq:main-inequality}
 \E_{\mathbf Z}\left(\prod_{j=1}^n
 g_j\!\left(\left|\ip{v_j}{\mathbf Z}\right|\right)\right)
 \geq
 \prod_{j=1}^n \E_Z g_j(|Z|).
\end{equation}
\end{proposition}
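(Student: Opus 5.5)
The plan is to follow the Ouimet--Greaves mechanism, reducing everything to exponential test functions via the representation \eqref{eq:fj-representation} and Tonelli's theorem. Write $g_j(r)=\int_0^\infty \exp(-t_j^2/(2r^2))\,d\rho_j(t_j)$. Since all integrands are nonnegative, Tonelli gives
\[
\E_{\mathbf Z}\Bigl(\prod_{j=1}^n g_j(|\ip{v_j}{\mathbf Z}|)\Bigr)
=\int\!\cdots\!\int \E_{\mathbf Z}\exp\Bigl(-\sum_{j=1}^n\frac{t_j^2}{2\ip{v_j}{\mathbf Z}^2}\Bigr)\,d\rho_1(t_1)\cdots d\rho_n(t_n),
\]
and similarly $\prod_j\E_Z g_j(|Z|)=\int\cdots\int\prod_j \E_Z\exp(-t_j^2/(2Z^2))\,d\rho_1\cdots d\rho_n$. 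It therefore suffices to prove the pointwise inequality
\[
\E_{\mathbf Z}\exp\Bigl(-\sum_{j=1}^n\frac{t_j^2}{2\ip{v_j}{\mathbf Z}^2}\Bigr)\ \ge\ \prod_{j=1}^n \E_Z \exp\Bigl(-\frac{t_j^2}{2Z^2}\Bigr)
\]
for every $(t_1,\dots,t_n)\in[0,\infty)^n$. We recall the classical one-dimensional formula $\E_Z\exp(-s^2/(2Z^2))=e^{-s}$ for $s\ge 0$; this is also where \eqref{eq:finiteness} makes the right-hand side finite.

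\emph{Reduction to positive parameters.} If some $t_j=0$, the corresponding factor equals $1$ on both sides, so we simply discard those indices. The remaining vectors still have no collinear pair. If they do not span $\R^d$, we project $\mathbf Z$ onto their span $V$. The projection is a standard Gaussian vector in $V$, so we may work in $V$. Hence we may assume all $t_j>0$ and that the $v_j$ span the ambient space, as required by Lemma~\ref{thm:chamber-identities2}.

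\emph{Key computation.} Expanding the squared norm in \eqref{eq:nu} with these parameters $t_j$ gives
\[
\Bigl\|\sum_j\frac{t_jv_j}{\ip{v_j}{x}}\Bigr\|^2=\sum_j\frac{t_j^2}{\ip{v_j}{x}^2}+2\sum_{i<j}\frac{t_it_j\ip{v_i}{v_j}}{\ip{v_i}{x}\ip{v_j}{x}}.
\]
Therefore
\[
\exp\Bigl(-\sum_j\frac{t_j^2}{2\ip{v_j}{x}^2}\Bigr)\,d\gamma_d
= e^{-\sum_j t_j}\exp\bigl(H(x)\bigr)\,d\nu,
\qquad
H(x)=\sum_{i<j}\frac{t_it_j\ip{v_i}{v_j}}{\ip{v_i}{x}\ip{v_j}{x}} .
\]
By Lemma~\ref{lem:tilt}, $\nu$ is a probability measure, and each term of $H$ lies in $L^1(\nu)$. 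Since $i\neq j$ in every term, \eqref{eq:nu-second} gives $\E_\nu H=0$. Jensen's inequality for the convex exponential then yields
\[
\E_{\mathbf Z}\exp\Bigl(-\sum_j\frac{t_j^2}{2\ip{v_j}{\mathbf Z}^2}\Bigr)=e^{-\sum_j t_j}\,\E_\nu e^{H}\ \ge\ e^{-\sum_jt_j}\,e^{\E_\nu H}=\prod_j e^{-t_j},
\]
which is the pointwise inequality. Integrating against $d\rho_1\cdots d\rho_n$ finishes the proof.

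\emph{Main obstacle.} The delicate step is justifying Jensen. This requires $H\in L^1(\nu)$ and the vanishing of the off-diagonal expectations \eqref{eq:nu-second}, which rest on the generalized chamber identities and on the change of variables of Lemma~\ref{lem:cov} being valid for the unbounded function $H$. Lemma~\ref{lem:tilt} supplies exactly this through the $L^2(\nu)$ bound $\E_\nu\ip{v_i}{\mathbf Z}^{-2}=1/t_i$ and Cauchy--Schwarz. A secondary care point is the reduction step: we must check that discarding indices with $t_j=0$ and projecting onto the span keep the hypotheses of Lemma~\ref{thm:chamber-identities2}, namely no collinear pair and a spanning family.
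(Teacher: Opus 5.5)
Your proposal is correct and follows the paper's proof essentially step by step: Tonelli reduces the claim to the kernel bound $K(\mathbf t)\ge e^{-\sum_j t_j}$, you change measure to the tilted probability $\nu$ of Lemma~\ref{lem:tilt}, and Jensen applies because the exponent has $\nu$-mean zero by \eqref{eq:nu-second}. Your explicit treatment of indices with $t_j=0$, and of the spanning hypothesis by projecting onto the span, fills in details the paper passes over silently, since it simply fixes $\mathbf t\in(0,\infty)^n$ and never checks spanning.
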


Some remarks are in order: the extra collinearity hypothesis is there because we invoke Lemma \ref{thm:chamber-identities2} in its proof. It is clear that one might perturb the $v_j$ variables and then take a limit.  Proposition \ref{prop:5.1} allows to assume the integrals involved in the proof exist and avoids distinguishing cases when the inequalities in the statement of Theorem \ref{prop:main} diverge.


\begin{corollary}\label{corr}
Let $v_1,\ldots,v_n\in\mathbb{S}^{d-1}$, $a_1,\ldots,a_n>0$  and $\mathbf Z$ be the standard Gaussian vector. Then
\[
 \E_{\mathbf{Z}}\left(\prod_{j=1}^n
 \log\left(1+\frac{\ip{\mathbf v_j}{\mathbf Z}^2}{a_j}\right)\right)
 \geq
 \prod_{j=1}^n
 \E_Z\log\left(1+\frac{Z^2}{a_j}\right)
\]
holds.
\end{corollary}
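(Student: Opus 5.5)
The plan is to obtain Corollary~\ref{corr} as a direct specialization of Theorem~\ref{prop:main}, with the functions
$$
f_j(s)=\log\left(1+\frac{1}{a_j s}\right),\qquad s>0 .
$$
On the event $\{\ip{v_j}{\mathbf Z}\neq 0\}$, which has full Gaussian measure, substituting $s=1/\ip{v_j}{\mathbf Z}^2$ gives
$$
f_j\left(\frac{1}{\ip{v_j}{\mathbf Z}^2}\right)=\log\left(1+\frac{\ip{v_j}{\mathbf Z}^2}{a_j}\right).
$$
In the same way, $f_j(1/Z^2)=\log(1+Z^2/a_j)$ almost surely. So the two sides of Theorem~\ref{prop:main} (with $g_j=f_j$) are exactly the two sides of the corollary. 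Theorem~\ref{prop:main} has no non-collinearity hypothesis, so arbitrary unit vectors are allowed. It therefore only remains to check that each $f_j$ is a nonzero completely monotone function with values in $[0,\infty)$.

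\emph{Key step: complete monotonicity.} I will show this by writing $f_j$ as a Laplace transform of a positive measure. Differentiating gives
$$
-f_j'(s)=\frac1s-\frac{a_j}{a_js+1}=\frac{1}{s(a_js+1)}>0 .
$$
Using $\frac1s=\int_0^\infty e^{-ts}\,dt$ and $\frac{a_j}{a_js+1}=\frac{1}{s+1/a_j}=\int_0^\infty e^{-ts}e^{-t/a_j}\,dt$, this becomes
$$
-f_j'(s)=\int_0^\infty e^{-ts}\bigl(1-e^{-t/a_j}\bigr)\,dt .
$$
Since $f_j(s)\to0$ as $s\to\infty$, integrating from $s$ to $\infty$ and applying Tonelli yields the Frullani-type representation
$$
f_j(s)=\int_0^\infty e^{-ts}\,\frac{1-e^{-t/a_j}}{t}\,dt .
$$
The density $(1-e^{-t/a_j})/t$ is positive, is bounded near $0$ (by $1/a_j$), and decays like $1/t$ at infinity. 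Hence the integral is finite for every $s>0$. Differentiating under the integral sign then gives $(-1)^k f_j^{(k)}(s)=\int_0^\infty t^k e^{-ts}\frac{1-e^{-t/a_j}}{t}\,dt\ge0$ for all $k\ge 0$. Clearly $f_j>0$, so $f_j$ is nonzero and $[0,\infty)$-valued. This is exactly the Bernstein--Hausdorff--Widder form mentioned after Theorem~\ref{prop:main}.

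\emph{Conclusion.} Theorem~\ref{prop:main} then gives the inequality in $(0,\infty]$. The right-hand side is in fact finite, since $\log(1+Z^2/a_j)\le Z^2/a_j$ gives $\E_Z\log(1+Z^2/a_j)\le 1/a_j$, so the statement is a genuine inequality between a finite quantity and a possibly infinite one. I expect the only real obstacle to be the complete monotonicity verification. Its subtle point is justifying the passage from $-f_j'$ to $f_j$ via the limit at infinity together with Tonelli's theorem; once the integral representation is established, the rest is immediate.
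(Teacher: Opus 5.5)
Your proposal is correct and follows the paper's route: specialize Theorem~\ref{prop:main} to $f_j(s)=\log\left(1+\frac{1}{a_j s}\right)$, which is completely monotone because of the Frullani representation $f_j(s)=\int_0^\infty e^{-ts}\,\frac{1-e^{-t/a_j}}{t}\,dt$. The paper simply cites Frullani's identity for this representation, whereas you derive it (via $-f_j'$ and Tonelli) and check complete monotonicity explicitly; this is a useful addition but does not change the argument.
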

This is a starightforward application of Theorem \ref{prop:main} and the identity
\begin{equation}
 \log\left(1+\frac{r^2}{a}\right)
 =\int_0^\infty
 e^{-u/r^2}\frac{1-e^{-u/a}}{u}\dd u,
\end{equation}
which follows for any $a>0$ by Frullani's identity.

\subsection{Proof of Proposition \ref{prop:5.1}}
Before we start it will be useful to study the terms on the right-hand side of \eqref{eq:main-inequality}

\begin{lemma}\label{lem:onedim}
	For every $t\geq0$,
	\begin{equation}\label{eq:one-dimensional-kernel}
		\E_Z\left[
		\exp\left(-\frac{t^2}{2Z^2}\right)
		\right]
		=
		e^{-t}.
	\end{equation}
\end{lemma}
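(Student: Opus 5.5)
The plan is to compute the one-dimensional Gaussian integral directly with the classical Cauchy--Schlömilch substitution. As a cross-check, I will also note that the identity is the case $d=n=1$ of the chamberwise change of variables, Lemma~\ref{lem:cov}. The case $t=0$ is trivial, since both sides equal $1$, so I fix $t>0$. By symmetry of $Z$,
\[
\E_Z\left[\exp\left(-\frac{t^2}{2Z^2}\right)\right]=\frac{2}{\sqrt{2\pi}}\,I(t),\qquad I(t)=\int_0^\infty \exp\left(-\frac{x^2}{2}-\frac{t^2}{2x^2}\right)\dd x,
\]
so it suffices to show $I(t)=\tfrac12\sqrt{2\pi}\,e^{-t}$.

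First I complete the square in the exponent:
\[
\frac{x^2}{2}+\frac{t^2}{2x^2}=\frac12\left(x-\frac{t}{x}\right)^2+t .
\]
Next I exploit the involution $x\mapsto t/x$ of $(0,\infty)$. It preserves the exponent and turns $\dd x$ into $\frac{t}{s^2}\dd s$, so
\[
I(t)=\int_0^\infty \frac{t}{x^2}\exp\left(-\frac{x^2}{2}-\frac{t^2}{2x^2}\right)\dd x .
\]
Averaging the two expressions for $I(t)$ gives
\[
I(t)=\frac{e^{-t}}{2}\int_0^\infty\left(1+\frac{t}{x^2}\right)\exp\left(-\frac12\left(x-\frac tx\right)^2\right)\dd x .
\]
Then I substitute $y=x-t/x$. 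This map is a smooth increasing bijection $(0,\infty)\to\R$ with $\dd y=(1+t/x^2)\,\dd x$. The integral therefore becomes $\int_{\R}e^{-y^2/2}\dd y=\sqrt{2\pi}$, which yields $I(t)=\tfrac12\sqrt{2\pi}\,e^{-t}$ and hence \eqref{eq:one-dimensional-kernel}.

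Alternatively, I would take $d=n=1$, $v_1=1$ and $t_1=t$ in Lemma~\ref{lem:cov}. Then $T(x)=x-t/x$ has one branch on each half-line, and $d\nu(x)=e^{t}e^{-t^2/(2x^2)}d\gamma_1(x)$. Lemma~\ref{lem:tilt} says that $\nu$ is a probability measure, which is exactly the claimed identity. Since that lemma is stated under the spanning and noncollinearity hypotheses, which hold trivially here, this route is also legitimate. However, the direct computation above is self-contained, so I would present it and mention this route only as a remark.

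The only delicate point is the substitution step: I need that $x\mapsto x-t/x$ is a diffeomorphism of $(0,\infty)$ onto $\R$. This holds because its derivative $1+t/x^2$ is positive and the limits at $0^+$ and at $\infty$ are $-\infty$ and $+\infty$. All integrands are positive, so Tonelli's theorem justifies every manipulation.
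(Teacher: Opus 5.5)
Your computation is correct and complete. The completed square $\frac{x^2}{2}+\frac{t^2}{2x^2}=\frac12\bigl(x-\frac tx\bigr)^2+t$, the invariance of the exponent under $x\mapsto t/x$, and the substitution $y=x-t/x$ together give $I(t)=\tfrac12\sqrt{2\pi}\,e^{-t}$. The substitution is valid because $y=x-t/x$ is an increasing diffeomorphism of $(0,\infty)$ onto $\R$, and positivity of the integrands justifies every manipulation.

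The paper gives no argument for Lemma~\ref{lem:onedim}; it is left to the reader. So there is no competing route to compare with. Your two routes are, however, the same computation. With $d=n=1$, $v_1=1$ and $t_1=t$, the two chambers are the half-lines. Over each $y$ the branches are $u_+(y)>0$ and $u_-(y)=-t/u_+(y)<0$, since $u_+u_-=-t$. Their weights are $\mu(u_\pm)=u_\pm/(u_\pm-u_\mp)$, and these sum to $1$.

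Thus the map $x\mapsto -t/x$, the reflection $x\mapsto -x$ composed with your involution $x\mapsto t/x$, is exactly the map exchanging the two branches. Your averaging step is the $d=n=1$ instance of the chamber identity $\sum_u\mu(u)=1$ on which Lemma~\ref{lem:tilt} rests.

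The alternative route is not circular, because Lemma~\ref{lem:tilt} does not use Lemma~\ref{lem:onedim}. It does rely on Lemmas~\ref{lem:branches} and~\ref{thm:chamber-identities2}, whose proofs the paper omits. Presenting the self-contained direct computation, as you propose, is therefore the better choice.
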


We leave the proof to the reader.

We are now ready to study the inequality in the statement. Let us start rewritting the right hand side as follows. Using the definition of the $g_j$ in equation \eqref{eq:fj-representation}, \eqref{eq:one-dimensional-kernel}, Lemma \ref{lem:onedim}
and Tonelli's theorem, we obtain
\begin{align}
	\prod_{j=1}^n\E_Z g_j(|Z|)
	&=
	\prod_{j=1}^n
	\int_{0}^{\infty}
	\E_Z\exp\left(-\frac{t^2}{2Z^2}\right)\dd
	\rho_j( t)
	\notag\\
	&=
	\prod_{j=1}^n
	\int_{0}^{\infty}e^{-t}\dd\rho_j( t)
	\notag\\
	&=
	\int_{[0,\infty)^n}
	\exp\left(-\sum_{j=1}^n t_j\right)
	\prod_{j=1}^n\dd\rho_j( t_j).
	\label{eq:right-kernel-representation}
\end{align}

The integral representations \eqref{eq:fj-representation} and Tonelli's
theorem give the following expression for the left hand side
\begin{align}
	\E_{\mathbf Z}\left(\prod_{j=1}^n
	g_j\!\left(\left|\ip{v_j}{\mathbf Z}\right|\right)\right)
	&=
	\int_{[0,\infty)^n}
	\E_{\mathbf Z}\exp\left(
	-\frac12\sum_{j=1}^n
	\frac{t_j^2}{\ip{v_j}{\mathbf Z}^2}
	\right)
	\prod_{j=1}^n\dd\rho_j( t_j)
	\notag\\
	&=
	\int_{[0,\infty)^n}
	K(\mathbf t)
	\prod_{j=1}^n\dd\rho_j( t_j).
	\label{eq:left-kernel-representation}
\end{align}
where
\begin{equation}\label{eq:kernel-c}
	K(\mathbf t)
	=
	\E_{\mathbf Z}\exp\left(
	-\frac12\sum_{j=1}^n
	\frac{t_j^2}{\ip{v_j}{\mathbf Z}^2}
	\right).
\end{equation}
for any $\mathbf t=(t_1,\ldots,t_n)\in[0,\infty)^n$. 

Comparison of
\eqref{eq:left-kernel-representation} and
\eqref{eq:right-kernel-representation} shows that Theorem \ref{prop:main} follows
from the following pointwise estimate.

\begin{lemma}
	\label{lem:reciprocal-kernel}
For every
	$\mathbf t=(t_1,\ldots,t_n)\in[0,\infty)^n$,
	\begin{equation}\label{eq:kernel-bound}
		K(\mathbf t)
		\geq
		\exp\left(-\sum_{j=1}^n t_j\right).
	\end{equation}
\end{lemma}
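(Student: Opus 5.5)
The plan is to reduce \eqref{eq:kernel-bound} to Jensen's inequality for the tilted probability measure $\nu$ of Lemma~\ref{lem:tilt}, taking the parameters in \eqref{eq:T-map} to be the same $t_j$ that appear in $K(\mathbf t)$.

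First I would dispose of degenerate situations. If some $t_j=0$, the corresponding term in the exponent of \eqref{eq:kernel-c} vanishes and the corresponding factor $e^{-t_j}$ equals $1$. So I may discard those indices and assume all $t_j>0$. If the remaining $v_j$ do not span $\R^d$, I would replace $\R^d$ by $V=\Span\{v_j\}$. The orthogonal projection of $\mathbf Z$ onto $V$ is a standard Gaussian vector in $V$, and the integrand depends only on that projection, so nothing changes. Non-collinearity is inherited from the hypothesis of Proposition~\ref{prop:5.1}. Hence the hypotheses of Lemma~\ref{thm:chamber-identities2}, and therefore of Lemmas~\ref{lem:cov} and~\ref{lem:tilt}, are in force.

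The key algebraic step is to expand the square in the density \eqref{eq:nu}:
\[
\Bigl\|\sum_{j=1}^n\frac{t_jv_j}{\ip{v_j}{x}}\Bigr\|^2=\sum_{j=1}^n\frac{t_j^2}{\ip{v_j}{x}^2}+\sum_{j\neq k}\frac{t_jt_k\ip{v_j}{v_k}}{\ip{v_j}{x}\ip{v_k}{x}}.
\]
Write $C(x)$ for the cross sum. Then, off the hyperplanes,
\[
\exp\Bigl(-\tfrac12\sum_j \tfrac{t_j^2}{\ip{v_j}{x}^2}\Bigr)\,d\gamma_d(x)=e^{-\sum_j t_j}\,e^{C(x)/2}\,d\nu(x),
\]
because $d\gamma_d = e^{-\sum t_j+\frac12\|\sum t_jv_j/\ip{v_j}{x}\|^2}d\nu$. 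The hyperplanes are null for both measures. This gives
\[
K(\mathbf t)=e^{-\sum_j t_j}\,\E_\nu\, e^{C(\mathbf Z)/2}.
\]

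Since $\nu$ is a probability measure by Lemma~\ref{lem:tilt}, Jensen's inequality for the convex function $\exp$ gives $\E_\nu e^{C/2}\ge \exp(\tfrac12\E_\nu C)$. By the off-diagonal case of \eqref{eq:nu-second}, every summand of $C$ has $\nu$-expectation $0$, so $\E_\nu C=0$ and \eqref{eq:kernel-bound} follows.

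The main point requiring care is the legitimacy of Jensen's inequality, namely that $C\in L^1(\nu)$, so that $\E_\nu C$ is a genuine finite number and equals $0$. This is exactly what the proof of Lemma~\ref{lem:tilt} provides: each $\ip{v_i}{\cdot}^{-1}$ lies in $L^2(\nu)$, so by Cauchy--Schwarz each product $\ip{v_j}{\cdot}^{-1}\ip{v_k}{\cdot}^{-1}$ lies in $L^1(\nu)$. The left side $\E_\nu e^{C/2}$ may a priori be $+\infty$, but Jensen's inequality still holds in $[0,\infty]$. Finally, since $K(\mathbf t)\le 1$, the identity above shows a posteriori that this expectation is finite.
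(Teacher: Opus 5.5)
Your proof is correct and is essentially the paper's argument: you tilt to $\nu$, write $K(\mathbf t)=e^{-\sum_j t_j}\,\E_\nu e^{C/2}$, observe that the exponent has $\nu$-mean zero by \eqref{eq:nu-second}, and apply Jensen. The paper computes the means of $\sum_j t_j^2\ip{v_j}{\mathbf Z}^{-2}$ and $\|\sum_j t_jv_j/\ip{v_j}{\mathbf Z}\|^2$ separately (both equal $\sum_j t_j$) rather than isolating the off-diagonal terms, which is the same computation; your handling of $t_j=0$, of non-spanning $v_j$, and of integrability for Jensen also covers cases the paper skips, since it only fixes $\mathbf t\in(0,\infty)^n$ and tacitly assumes the spanning hypothesis of Lemma~\ref{thm:chamber-identities2}.
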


Before we conclude let us say that this estimate is one of the key elements of the proof by Ouimet and Greaves \cite[proof of Theorem~1, equation~(9)]{OUI}. It should be noted that it is equivalent to Theorem \ref{prop:main} with $f_j(x)=\exp(-\frac{1}{2}t_j^2x)$.

Assuming the lemma
for the moment, equations \eqref{eq:left-kernel-representation} and
\eqref{eq:kernel-bound} give
\[
	\E_{\mathbf Z}\left(\prod_{j=1}^n
	g_j\!\left(\left|\ip{v_j}{\mathbf Z}\right|\right)\right)
	\geq
	\int_{[0,\infty)^n}
	\exp\left(-\sum_{j=1}^n t_j\right)
	\prod_{j=1}^n\dd\rho_j( t_j).
\]
By \eqref{eq:right-kernel-representation}, the last integral equals
$\prod_{j=1}^n\E_Z g_j(|Z|)$. This proves
\eqref{eq:main-inequality}.

\begin{proof}[Proof of Lemma~\ref{lem:reciprocal-kernel}]
	Fix
	$\mathbf t=(t_1,\ldots,t_n)\in(0,\infty)^n$. The vectors
	$v_1,\ldots,v_n$ are non collinear pairwise and, consequently,
	Lemma~\ref{lem:tilt} applies with parameters $t_1,\ldots,t_n$.
	Let $\nu$ be the probability measure defined in \eqref{eq:nu}, and let
	$\mathbf Z$ have law $\nu$. Changing from standard Gaussian measure to
	$\nu$ gives
	\begin{align}
		K_R(\mathbf t)
		&=
		\exp\left(-\sum_{j=1}^n t_j\right)
		\E_\nu\exp\Bigg[
		-\frac12\Bigg\{
		\sum_{j=1}^n
		\frac{t_j^2}{\ip{v_j}{\mathbf Z}^2}
		-
		\left\|
		\sum_{j=1}^n
		\frac{t_jv_j}{\ip{v_j}{\mathbf Z}}
		\right\|^2
		\Bigg\}\Bigg].
		\label{eq:kernel-nu}
	\end{align}
	By \eqref{eq:nu-second},
	\begin{align}
		\E_\nu
		\left(\sum_{j=1}^n
		\frac{t_j^2}{\ip{v_j}{\mathbf Z}^2}\right)
		&=
		\sum_{j=1}^n
		t_j^2
		\E_\nu
		\frac{1}{\ip{v_j}{\mathbf Z}^2}
		=
		\sum_{j=1}^n t_j,
		\label{eq:first-energy}
        \end{align}

        \begin{align}
		\E_\nu
		\left\|
		\sum_{j=1}^n
		\frac{t_jv_j}{\ip{v_j}{\mathbf Z}}
		\right\|^2
		&=
		\sum_{i,j=1}^n
		t_it_j\ip{v_i}{v_j}
		\E_\nu\left(
		\frac{1}{
		\ip{v_i}{\mathbf Z}\ip{v_j}{\mathbf Z}}\right)
		\notag\\
		&=
		\sum_{j=1}^n t_j,
		\label{eq:second-energy}
	\end{align}
	where the last equality also uses $\|v_j\|=1$. Hence the random variable
	between braces in \eqref{eq:kernel-nu} has mean zero. Since
	$s\mapsto e^{-s/2}$ is convex, Jensen's inequality gives
	\[
		K(\mathbf t)
		\geq
		\exp\left(-\sum_{j=1}^n t_j\right).
	\]
\end{proof}

\textsc{Remark:} Notice that the kernel depends on $R$, namely
the correlation of the variables in the denominator, which is the Gram matrix
\[
	R=\bigl(\langle v_i, v_j\rangle\bigr)_{i,j=1}^n.
\]
In other words, the joint distribution of
\[
	\bigl(
	\ip{v_1}{\mathbf Z},\ldots,
	\ip{v_n}{\mathbf Z}
	\bigr)
\]
is $N(0,R)$ and hence depends only on the Gram matrix $R$. This dependence have been waved in our presentation as the correlation matrix plays no role is encrypted in the vectors $v_j$.

	\section*{Acknowledgements}
	
	This paper was written while the first named author was visiting Bruno Staffa at  the Max Planck Institute for Mathematics (at Bonn). He is grateful to them for their support and hospitality.  

    The authors acknowledge the use of Artificial Intelligence in the form of interaction with Large Language Models both in the research and drafting of this manuscript.
    
	\bibliographystyle{abbrv}
	\bibliography{references}
	\vspace{.5em}
	
	\noindent
	\begin{minipage}[t]{0.48\textwidth}
		\noindent
		Ángel D. Martínez\\
		\textsc{Departamento de Matemáticas}\\
		\textsc{CUNEF Universidad}\\
		\textsc{Madrid, Spain}\\
		\textit{Email:} \texttt{angeld.martinez@cunef.edu}
	\end{minipage}
	\hfill
	\begin{minipage}[t]{0.48\textwidth}
		\noindent
		Oscar Ortega-Moreno\\
		\textsc{Departamento de Matemáticas}\\
		\textsc{CUNEF Universidad}\\
		\textsc{Madrid, Spain}\\
		\textit{Email:} \texttt{oscar.ortegamoreno@cunef.edu}
	\end{minipage}
	
\end{document}